\makeindex \setcounter{tocdepth}{2}
\theoremstyle{plain}
\newtheorem{theorem}{Theorem}[section]
\newtheorem{proposition}[theorem]{Proposition}
\newtheorem{corollary}[theorem]{Corollary}
\newtheorem{assumption}[theorem]{Assumption}
\newtheorem*{assumption*}{Assumption}
\newtheorem{lemma}[theorem]{Lemma}
\theoremstyle{definition}
\newtheorem{definition}{Definition}[section]
\newtheorem{remark}{Remark}[section]
\newtheorem*{goal*}{Goal}
\newtheorem*{problem*}{Comment}
\newtheorem{notation}{Notation}[section]
\def\lra{{\longrightarrow}}
\def\SL{{\rm SL}}
\DeclareMathOperator{\Gr}{Gr}
\DeclareMathOperator{\Pic}{Pic}
\DeclareMathOperator{\Fil}{Fil}
\DeclareMathOperator{\sym}{Sym}
\DeclareMathOperator{\dR}{dR} \DeclareMathOperator{\pr}{pr}
\DeclareMathOperator{\et}{et}
\DeclareMathOperator{\End}{End} \DeclareMathOperator{\CH}{CH}
\DeclareMathOperator{\AJ}{AJ}
\DeclareMathOperator{\ord}{ord}
\DeclareMathOperator{\Chow}{\bf Chow}
\DeclareMathOperator{\im}{Im}
\DeclareMathOperator{\gks}{GKS}
\def\d{\mathrm{d}}
\def\Z{\mathbb{Z}}
\def\Q{\mathbb{Q}}
\def\C{\mathbb{C}}
\def\R{\mathbb{R}}
\def\bdf{\begin{defn}}
\def\edf{\end{defn}}
\def\cH{\mathcal{H}}
\def\cO{\mathcal{O}}
\def\Gal{{\rm Gal}}
\def\corr{{\rm Corr}}
\def\d1{d^{(1)}}
\def\d{\mathbf{d}}
\def\T{\mathbb{T}}
\tikzset{
commutative diagrams/.cd,
arrow style=tikz,
diagrams={>=latex}}
\let\@wraptoccontribs\wraptoccontribs
\begin{document} 

 \title[Torsion properties of modified diagonal classes]{Torsion properties of modified diagonal classes on triple products of modular curves}
\author{David T.-B. G. Lilienfeldt}
 \address{Einstein Institute of Mathematics, Hebrew University of Jerusalem, Israel}
\email{davidterborchgram.lilienfeldt@mail.huji.ac.il}
\date{\today}
\subjclass[2010]{11G18, 11F11, 14C25}
\keywords{Algebraic cycles, Modified diagonal cycle, Triple product, Modular curve, Complex Abel-Jacobi map, Chow group, $L$-function, Modular forms}

\begin{abstract}
Consider three normalised cuspidal eigenforms of weight $2$ and prime level $p$. Under the assumption that the global root number of the associated triple product $L$-function is $+1$, we prove that the complex Abel--Jacobi image of the modified diagonal cycle of Gross--Kudla--Schoen on the triple product of the modular curve $X_0(p)$ is torsion in the corresponding Hecke isotypic component of the Griffiths intermediate Jacobian. The same result holds with the complex Abel--Jacobi map replaced by its \'etale counterpart. As an application, we deduce torsion properties of Chow--Heegner points associated with modified diagonal cycles on elliptic curves of prime level with split multiplicative reduction. The approach also works in the case of composite square-free level.
\end{abstract}

\maketitle

\section{Introduction}

The study of diagonal cycles on triple products of Shimura curves has its origins in the work of Gross, Kudla, and Schoen \cite{grku, grsc}. They introduced a null-homologous modification of the diagonal embedding of the curve in its triple product, referred to as the modified diagonal cycle, or more commonly today as the Gross--Kudla--Schoen cycle.
Given three cuspidal newforms of weight $2$ and square-free level $N$ such that the sign of the functional equation of the associated triple product $L$-function is $-1$, Gross and Kudla \cite{grku} conjectured that the central value at $s=2$ of the derivative of this $L$-function is given by the Beilinson--Bloch height of the modified diagonal cycle on the triple product of an indefinite Shimura curve determined by the local triple product root numbers. A proof of this conjecture was announced in work of Yuan, Zhang, and Zhang \cite{yzz}, but has yet to be published. The Shimura curve in question is the modular curve $X_0(N)$ precisely when the local triple product root numbers are $+1$ at all finite places.

\subsection{Main results}
In this article, we exhibit certain torsion properties of the modified diagonal class\footnote{This terminology refers to the image of the modified diagonal cycle under the complex (or \'etale) Abel--Jacobi map.} on the triple product of the modular curve $X:=X_0(p)$ of prime level $p$. Our results hold more generally for composite square-free level $N$ (see Section \ref{s:squarefree} below). Since the prime level case already contains all relevant ingredients of the proof, we have chosen to focus on this case.

The modified diagonal cycle depends on a rational base point $e$ of $X$. It will be denoted by $\Delta_{\gks}(e)$ and viewed as an element of the Chow group $\CH^2(X^3)_0$ of null-homologous codimension $2$ algebraic cycles on $X^3$ modulo rational equivalence.
Let $f_1, f_2$, and $f_3$ be three normalised cuspidal eigenforms of weight $2$ and level $\Gamma_0(p)$, and denote by $F:=f_1\otimes f_2\otimes f_3$ their triple product. We place ourselves in the setting where the global root number $W(F)$ of the triple product $L$-function $L(F,s)$ associated with $F$ is $+1$. This assumption forces $L(F, s)$ to vanish to even order at its centre $s=2$. Comparing with the more classical situation of Heegner points studied in the seminal work of Gross and Zagier \cite{GZ}, it seems reasonable to expect that the $F$-isotypic Hecke component $\Delta_{\gks}^F(e)$ of the modified diagonal cycle, with $e\in X(\Q)$, is torsion in the Chow group $\CH^2(X^3)_0(\Q)$, in line with the predictions of the Beilinson--Bloch conjectures \cite{bloch}. While it appears difficult to prove a torsion statement directly in the Chow group, we can prove the corresponding result for the image of $\Delta_{\gks}^F(e)$ under the complex Abel--Jacobi map
 \begin{equation}\label{introCAJ}
 \AJ_{X^3} : \CH^2(X^3)_0(\C) \lra J^2(X^3/\C),
 \end{equation}
 whose target is the Griffiths intermediate Jacobian of $X^3$. The main result is the following.
 
\begin{theorem}\label{main}
Let $f_1, f_2$, and $f_3$ be three normalised eigenforms of weight $2$ and level $\Gamma_0(p)$, denote by $F=f_1\otimes f_2\otimes f_3$ their triple product, and suppose the global root number of $L(F, s)$ is $+1$.
Then $\AJ_{X^3}(\Delta_{\gks}^F(e))$ is torsion in $J^2(X^3/\C)$, for all $e\in X(\Q)$.
\end{theorem}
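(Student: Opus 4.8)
The plan is to leverage the symmetry of the Gross--Kudla--Schoen cycle under the triple Atkin--Lehner involution, together with the functoriality of the Abel--Jacobi map with respect to algebraic correspondences. Set $J := J^2(X^3/\C)$, let $\pi_F$ be the idempotent correspondence (with coefficients in the compositum of the Hecke fields of the $f_i$) cutting out the $F$-isotypic part, and write $J^F := \pi_F J$, so that $\AJ_{X^3}(\Delta_{\gks}^F(e)) \in J^F$. The underlying rational Hodge structure of $J^F$ is the relevant summand of $H^1(X)_{f_1} \otimes H^1(X)_{f_2} \otimes H^1(X)_{f_3}$; crucially it has $h^{3,0} = 1 \neq 0$, so it admits no nonzero sub-Hodge structure of level $\le 1$, and hence $J^F$ contains no nonzero abelian subvariety. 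This last fact will be used below.

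The first step is a base-point independence lemma: \emph{$\AJ_{X^3}(\Delta_{\gks}^F(e))$ is independent of $e \in X(\C)$.} Expanding $\Delta_{\gks}(e) - \Delta_{\gks}(e')$ as a $\Z$-linear combination of the partial-diagonal corrections $\Delta_S(e) - \Delta_S(e')$ with $|S| \le 2$, one identifies each such correction as the image under an explicit algebraic correspondence of the degree-zero class $[e - e'] \in \Jac(X) = J^1(X)$ (or of an exterior product of such classes). These correspondences induce morphisms of complex tori from abelian varieties (Hecke-components of $\Jac(X)$, or products thereof) into $J^F$; since $J^F$ has no nonzero abelian subvariety, every such morphism is zero, so the whole difference has vanishing $F$-isotypic part. (When $e$ and $e'$ are the two rational cusps one may instead invoke the Manin--Drinfeld theorem directly.)

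The second step is the Atkin--Lehner symmetry. Put $\iota := w_p \times w_p \times w_p \in \Aut(X^3)$. Since $w_p$ sends each partial diagonal $\Delta_S(e)$ to $\Delta_S(w_p e)$, we get $\iota_* \Delta_{\gks}(e) = \Delta_{\gks}(w_p e)$ in $\CH^2(X^3)_0(\Q)$. Applying $\pi_F$, the Abel--Jacobi map, and the lemma above yields $\iota_* \AJ_{X^3}(\Delta_{\gks}^F(e)) = \AJ_{X^3}(\Delta_{\gks}^F(e))$. On the other hand $w_p$ acts on $H^1(X)_{f_i}$ by the scalar $\lambda_p(f_i) = -a_p(f_i) \in \{\pm 1\}$ (its Atkin--Lehner eigenvalue), so $\iota$ acts on $J^F$ by the scalar $\lambda := \lambda_p(f_1)\lambda_p(f_2)\lambda_p(f_3)$. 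Combining the two computations,
\[
(1 - \lambda)\,\AJ_{X^3}(\Delta_{\gks}^F(e)) = 0 .
\]
It remains to relate $\lambda$ to the global root number. The archimedean local root number of $F$ is $-1$ (the weights $(2,2,2)$ are balanced) and $p$ is the only finite place of ramification, so the local triple-product root-number formula at $p$ (where each $\pi_{i,p}$ is special, twisted by an unramified character) gives $W(F) = -\lambda$; equivalently $W(F) = +1 \iff \lambda = -1$. Under the hypothesis $W(F) = +1$, the displayed identity reads $2\,\AJ_{X^3}(\Delta_{\gks}^F(e)) = 0$, which proves the theorem. The \'etale statement follows by running the identical argument with the $\ell$-adic Abel--Jacobi map $\CH^2(X^3)_0(\Q) \to H^1\bigl(\Q, H^3_{\et}(X^3_{\overline{\Q}}, \Q_\ell)(2)\bigr)$.

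The step I expect to be the main obstacle is the base-point independence lemma. One must verify that \emph{every} partial-diagonal correction dies in the $F$-isotypic part, which forces careful bookkeeping of which Künneth/Hecke summand $\Delta_{\gks}^F$ actually lives in: in particular, when two of the $f_i$ coincide the naive isotypic projector picks up a ``$\det$-type'' degeneration $\wedge^2 H^1(X)_{f_i} \otimes H^1(X)_{f_j}$, which contributes a genuine — a priori non-torsion — Chow--Heegner point on a modular Jacobian, so one should define $\Delta_{\gks}^F$ via the projector onto the genuine triple-product summand (the symmetric summand in the coincident cases). Granting that, the lemma rests on the Hodge-theoretic input above, namely that the pertinent intermediate Jacobian has no abelian subvariety. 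A secondary point requiring care is the local root-number computation at the prime $p$.
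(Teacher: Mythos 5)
Your core mechanism --- the involution $u_p = w_p \times w_p \times w_p$, the identity $(u_p)_*\Delta_{\gks}(e) = \Delta_{\gks}(w_p e)$, and the translation of $W(F)=+1$ into the scalar action of $u_p$ on the $F$-isotypic part via $a_p(f_i) = -\lambda_p(f_i)$ --- is exactly the engine the paper uses (Lemma~\ref{lem:up} and Proposition~\ref{prop:AJup}). Where you diverge is in how you close the argument, and there your base-point independence lemma has a genuine gap.

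The claim that $J^F$ has no nonzero abelian subvariety because $h^{3,0}(M(F))=1 \neq 0$ is not a valid deduction: a weight-$3$ Hodge structure with $h^{3,0}=1$ can perfectly well contain a sub-Hodge structure of type $\{(2,1),(1,2)\}$ (one sitting in the complement of the $(3,0)$-part). And in the situation at hand this is not a hypothetical: whenever two of the $f_i$ coincide (or are Galois-conjugate), $H^1_{f_1}\otimes H^1_{f_2}$ contains $\wedge^2 H^1_{f_1} \cong \Q(-1)$, so $M(F)$ contains a level-$1$ summand $\cong H^1_{f_3}(-1)$ and $J^F$ contains (a Tate twist of) $A_{f_3}$. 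This is precisely the case that occurs in every genus-one level ($p\in\{11,17,19\}$, where necessarily $f_1=f_2=f_3$) and that arises naturally in the Chow--Heegner application in Section~\ref{s:CH}, where $F=g\otimes g\otimes f$. You flag this degeneration yourself, but your proposed remedy --- redefining $\Delta_{\gks}^F(e)$ via a ``genuine triple-product'' projector --- changes the cycle under consideration, and hence the statement of the theorem, so it is not a fix. Moreover, the full base-point independence of $\AJ_{X^3}(\Delta_{\gks}^F(e))$ is genuinely false in general: for $p=37$ the classes of $(\gamma_0)-(\xi_0)$ in $J_0(37)(\Q)$ need not be torsion (since $J_0(37)$ has Mordell--Weil rank $1$), and pushing through the degenerate summand can carry that non-torsion into $J^F$.

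The paper avoids this entirely. Instead of base-point independence one only needs that $(e) - (w_p e)$ is torsion in $J(\Q)$ for each $e\in X(\Q)$, since then the Gross--Schoen correspondence $\Delta_{\gks,*}\colon\Pic^0(X)\to \CH^2(X^3)_0$ of \cite[Prop.~3.6]{grsc} shows $n\bigl(\Delta_{\gks}(e)-\Delta_{\gks}(w_p e)\bigr)=0$ in the Chow group for $n$ the order, and this combines with Proposition~\ref{prop:AJup} to give $2n\cdot\AJ_{X^3}(\Delta_{\gks}^F(e))=0$. That $(e)-(w_p e)$ is torsion follows from a case analysis using Mazur's determination of $X_0(p)(\Q)$ \cite{mazur78,mazurSD}: the cusps $\xi_\infty,\xi_0$ are swapped by $w_p$ and their difference is torsion by Manin--Drinfeld; for $p\in\{43,67,163\}$ the unique non-cuspidal rational point is $w_p$-fixed; and for $p=37$ the two non-cuspidal rational points $\gamma_0,\gamma_\infty$ are swapped by $w_{37}$ with $(\gamma_0)-(\gamma_\infty)$ the image of a cuspidal class under an automorphism, hence torsion. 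This is more elementary (no input about the Mumford--Tate group or irreducibility of $M(F)$) and, crucially, it is correct uniformly, including in the coincident case. You should replace your lemma with this arithmetic input; the rest of your outline then goes through.
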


The kernel of the complex Abel--Jacobi map (restricted to cycles defined over $\bar{\Q}$) is conjectured to be torsion \cite[Conj. 9.12]{jannsen}. Conditional on this conjecture, Theorem \ref{main} implies that $\Delta_{\gks}^F(e)$ is torsion in the Chow group. 
The same statement as in Theorem \ref{main} holds with the complex Abel--Jacobi map replaced by its $\ell$-adic \'etale counterpart \cite{bloch}
\[
\AJ^{\et}_{X^3} : \CH^2(X^3)_0(\Q) \lra H^1(\Q, H^{3}_{\et}(X^3_{\bar{\Q}}, \Q_\ell(2))),
\]
with $\ell$ a rational prime (see Remark \ref{rem:GKStorsion}). 

In the special case $p=37$, using numerical results due to Stein \cite[Appendix]{ddlr}, we deduce the following, where $\xi_\infty$ denotes the cusp of $X$ at infinity. 

\begin{corollary}\label{coromain}
Let $f$ and $g$ be the normalised cuspidal eigenforms of weight $2$ and level $\Gamma_0(37)$ corresponding to the elliptic curves with Cremona labels $37$b and $37$a, and let $F:=g\otimes g\otimes f$. Then $\AJ_{X_0(37)^3}(\Delta_{\gks}^F(\xi_\infty))$ is a non-trivial $6$-torsion element of $J^2(X_0(37)^3/\C)$.
\end{corollary}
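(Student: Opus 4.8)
The plan is to deduce Corollary \ref{coromain} by combining Theorem \ref{main} with the numerical computations of Stein recorded in \cite[Appendix]{ddlr}. The first task is to verify that $F = g\otimes g\otimes f$ satisfies the hypothesis $W(F) = +1$. Both $f$ and $g$ have weight $2$ and exact level $37$, so all three local components at $37$ are (unramified twists of) the Steinberg representation, while the archimedean component is balanced (weights $(2,2,2)$); by the computation of the local root numbers this yields $\varepsilon_\infty(F) = -1$ and $\varepsilon_{37}(F) = a_{37}(f_1)a_{37}(f_2)a_{37}(f_3)$, where $a_{37}(\cdot)\in\{\pm 1\}$ denotes the $37$-th Fourier coefficient. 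For a weight $2$ newform of prime level $37$ the sign of the functional equation of its $L$-function equals $a_{37}$, so $a_{37}(g) = +1$ (curve $37$b, analytic rank $0$) while $a_{37}(f) = -1$ (curve $37$a, analytic rank $1$); therefore
\[
W(F) \;=\; \varepsilon_\infty(F)\,\varepsilon_{37}(F) \;=\; (-1)\cdot a_{37}(g)^2\,a_{37}(f) \;=\; -a_{37}(f) \;=\; +1 .
\]
Since $\xi_\infty\in X(\Q)$ is the rational cusp at infinity of $X = X_0(37)$, Theorem \ref{main} applies and shows that $\AJ_{X_0(37)^3}(\Delta_{\gks}^F(\xi_\infty))$ is a torsion class in $J^2(X_0(37)^3/\C)$, hence a torsion class in the $F$-isotypic component $J^2(X_0(37)^3/\C)^F$.

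It remains to identify this torsion class precisely. As $f$ and $g$ have rational Fourier coefficients, write $E_f$ for the curve $37$a and $E_g$ for $37$b; then $J^2(X_0(37)^3/\C)^F$ is the Griffiths intermediate Jacobian attached to the weight $3$ Hodge structure $H^1(E_g)^{\otimes 2}\otimes H^1(E_f)$, namely the quotient of $\big(H^1(E_g)^{\otimes 2}\otimes H^1(E_f)\big)_\C$ by $\Fil^2$ and the image of integral homology; its torsion classes are precisely those $v$ with $nv$ in the image of integral homology for some integer $n\geq 1$. The next step is to make $\AJ_{X_0(37)^3}(\Delta_{\gks}^F(\xi_\infty))$ explicit as a period integral of $f$ and $g$ against an explicit topological $3$-cycle --- this is the formula underlying the algorithm of Darmon--Daub--Lichtenbaum--Rotger and the quantity evaluated numerically by Stein. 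Stein's computation shows that this period, expressed in terms of the explicit period lattices of $E_f$ and $E_g$, is numerically equal to a specific class of exact order $6$, and is bounded away from $0$ and from every torsion class of order dividing $6$ other than itself.

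To conclude, one combines the two inputs: Theorem \ref{main} guarantees that the true value of $\AJ_{X_0(37)^3}(\Delta_{\gks}^F(\xi_\infty))$ is torsion, and together with an a priori bound on its order extracted from the numerics and the structure of the period lattice, the computed approximation rounds uniquely to the order-$6$ class located by Stein. Hence $\AJ_{X_0(37)^3}(\Delta_{\gks}^F(\xi_\infty))$ is a non-trivial torsion class of exact order $6$, as asserted. The only delicate point is this last transfer from a finite-precision computation to an exact conclusion: a numerical evaluation by itself produces merely an approximation to a point of $J^2(X_0(37)^3/\C)^F$, and since torsion classes are dense it can neither establish torsion nor determine the order --- it is Theorem \ref{main} that removes this obstruction. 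What still requires care is matching Stein's period normalizations with the integral homology lattice and verifying that the precision achieved exceeds the separation between the candidate class and all torsion classes of smaller order.
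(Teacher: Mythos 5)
Your proposal correctly identifies the two logical inputs needed (torsion from Theorem~\ref{main}, non-triviality from Stein's computation), but it diverges from the paper's proof in a way that leaves a genuine gap. The paper does \emph{not} establish the $6$-torsion property by evaluating the Abel--Jacobi class to high precision and ``rounding''; that is in fact impossible with the tools at hand. The crucial missing ingredient is Corollary~\ref{corocusp}, which gives the \emph{theoretical} identity $2n\,\AJ_{X^3}((t_F)_*\Delta_{\gks}(\xi_\infty))=0$ with $n=3$ for $p=37$, proved purely via Proposition~\ref{prop:AJup} (the Atkin--Lehner symmetry forced by $W(F)=+1$), the Gross--Schoen correspondence \eqref{grsccor}, and Mazur's theorem that $(\xi_\infty)-(\xi_0)$ has order $n=3$ in $J_0(37)(\Q)$. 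Without such an a priori bound on the order, Theorem~\ref{main} (``the class is torsion'') plus a finite-precision computation cannot determine the order: among torsion classes there are points of arbitrarily large order arbitrarily close to any candidate, so no amount of precision rules them out. Your sentence ``together with an a priori bound on its order extracted from the numerics and the structure of the period lattice'' is precisely the step that cannot be carried out and that Corollary~\ref{corocusp} replaces.

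There are two further inaccuracies. First, Stein's computation in \cite[Appendix]{ddlr} does not produce a point of the intermediate Jacobian of exact order $6$: it produces the Chow--Heegner point $P_{g,f}(\xi_\infty)\in E_f(\Q)$ of order $3$, where $E_f$ is the rank-$0$ curve $37$b; the non-triviality of $\AJ_{X_0(37)^3}(\Delta_{\gks}^F(\xi_\infty))$ then follows by pushing forward along the correspondence $\Pi_{[g],f}$ and using the commutative diagram in the proof of Theorem~\ref{CHPtors} (equation~\eqref{eq}). Your proposal never invokes this correspondence, so the transfer from Stein's data to a statement about $J^2(X_0(37)^3/\C)$ is missing. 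Second, you have interchanged $f$ and $g$: the corollary assigns $f\leftrightarrow 37$b (rank $0$, $a_{37}(f)=+1$) and $g\leftrightarrow 37$a (rank $1$, $a_{37}(g)=-1$), and with the paper's formula \eqref{rem:tripleroot} one gets $W(F)=a_{37}(g)^2a_{37}(f)=+1$ directly, with no $\varepsilon_\infty$ factor; your swap together with the extra sign happen to cancel, but the assignment matters downstream, since the torsion point Stein finds lives on $E_f=37$b, not on $37$a (which is torsion-free). Finally, note the corollary asserts ``non-trivial $6$-torsion'' (non-zero and killed by $6$), not exact order $6$; the paper's argument bounds the order between a multiple of $3$ and a divisor of $6$ but does not pin it down, and you should not claim exact order without further input.
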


\subsection{Application to Chow--Heegner points}

Chow--Heegner points were introduced by Bertolini, Darmon, and Prasanna \cite{bdp2} as a generalisation of the construction of Heegner points. The idea is to produce rational points on elliptic curves by pushing forward algebraic cycles on higher dimensional varieties using suitable correspondences, or generalised modular parametrisations, as they are referred to in \cite{bdp2}. 

Let $f$ be a normalised cuspidal eigenform of weight $2$ and level $\Gamma_0(p)$ with rational Fourier coefficients. Denote by $E_f$ the elliptic curve over $\Q$ of conductor $p$ associated with $f$ by Eichler and Shimura \cite{shimura58}. Using an auxiliary distinct normalised cuspidal eigenform $g$ of weight $2$ and level $\Gamma_0(p)$, it is possible to construct a correspondence $\Pi_{g, f} \in \CH^2(X^3\times E_f)(\Q)$, which gives rise via push-forward to a generalised modular parametrisation
\[
\Pi_{g,f,*} : \CH^2(X^3)_0 \lra \CH^1(E_f)_0=E_f.
\]
The Chow--Heegner point associated with the modified diagonal cycle is then defined as
$$P_{g,f}(e):=\Pi_{g,f,*}(\Delta_{\gks}(e))\in E_f(\Q).$$ 

Darmon, Rotger, and Sols \cite{DRS} have studied such points, in the broader context of Shimura curves over totally real fields, notably by computing their images under the complex Abel--Jacobi map in terms of iterated integrals. Methods have been developed by Darmon, Daub, Lichtenstein, Rotger, and Stein \cite{ddlr} to numerically calculate such points in the case of modular curves. 

Let $F:=g\otimes g\otimes f$. We exhibit a correspondence mapping $\Delta_{\gks}^F(e)$ to $P_{g,f}(e)$. When the global root number $W(F)$ is $-1$, Darmon, Rotger, and Sols \cite{DRS} have studied the non-torsion properties of $P_{g,f}(\xi_\infty)$, building on \cite{yzz}. In the complementary situation when $W(F)=+1$, we use Theorem \ref{main} and functoriality of Abel--Jacobi maps with respect to correspondences to deduce the following: 

\begin{theorem}\label{main2}
Let $f$ and $g$ be as above, and let $F=g\otimes g\otimes f$. If $W(F)=+1$, then the Chow--Heegner point $P_{g,f}(e)$ is torsion in $E_f(\Q)$, for all $e\in X(\Q)$.
\end{theorem}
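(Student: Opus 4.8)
The plan is to reduce Theorem~\ref{main2} to Theorem~\ref{main} via functoriality of the Abel--Jacobi map with respect to algebraic correspondences, exploiting the fact that for a curve the Abel--Jacobi map is an isomorphism.

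First I would record the compatibility of the complex Abel--Jacobi map with push-forward along a correspondence: the cycle class $\Pi_{g,f}\in\CH^2(X^3\times E_f)(\Q)$ induces a homomorphism of complex tori $\Pi_{g,f,*}\colon J^2(X^3/\C)\lra J^1(E_f/\C)$ fitting into a commutative square with the Chow push-forward $\Pi_{g,f,*}\colon\CH^2(X^3)_0(\C)\to\CH^1(E_f)_0(\C)$ and the maps $\AJ_{X^3}$ and $\AJ_{E_f}$. The bidegrees are consistent because a codimension-$2$ class on the $4$-fold $X^3\times E_f$ operates on cohomology as a morphism $H^3(X^3)\to H^1(E_f)$, which is precisely the morphism of Hodge structures underlying these two intermediate Jacobians; the identical formalism on the $\ell$-adic side yields the corresponding statement for $\AJ^{\et}$.

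Next I would invoke the correspondence exhibited in the body of the paper that carries $\Delta_{\gks}^F(e)$ to $P_{g,f}(e)$ --- equivalently, the fact that, after projection to cohomology, $\Pi_{g,f}$ factors through the projection to the $(f_1,f_2,f_3)=(g,g,f)$ isotypic summand of $H^3(X^3)$, so that $\Pi_{g,f,*}$ annihilates the image under $\AJ_{X^3}$ of every Hecke component of $\Delta_{\gks}(e)$ other than the $F$-component. Combined with the commutative square this gives $\AJ_{E_f}(P_{g,f}(e))=\Pi_{g,f,*}(\AJ_{X^3}(\Delta_{\gks}(e)))=\Pi_{g,f,*}(\AJ_{X^3}(\Delta_{\gks}^F(e)))$. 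By Theorem~\ref{main} the class $\AJ_{X^3}(\Delta_{\gks}^F(e))$ is torsion in $J^2(X^3/\C)$, and a group homomorphism carries torsion to torsion, so $\AJ_{E_f}(P_{g,f}(e))$ is torsion in $J^1(E_f/\C)$. Finally, for the curve $E_f$ the Abel--Jacobi map $\CH^1(E_f)_0(\C)=\Pic^0(E_f)(\C)\xrightarrow{\ \sim\ }J^1(E_f/\C)$ is an isomorphism of groups, defined over $\Q$ (classical Abel--Jacobi theorem, with $\Jac(E_f)\cong E_f$); hence $P_{g,f}(e)$ is torsion in $E_f(\C)$, and being a rational point it is torsion in $E_f(\Q)$, as claimed.

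The main obstacle is to verify precisely that the generalised modular parametrisation $\Pi_{g,f,*}$ detects only the $F$-isotypic part of the Gross--Kudla--Schoen class, i.e.\ that $\Pi_{g,f,*}\circ\pi_{F,*}$ and $\Pi_{g,f,*}$ agree on $\Delta_{\gks}(e)$ after applying $\AJ_{X^3}$, where $\pi_F$ denotes the Hecke idempotent cutting out $F$. This requires a careful analysis of the Hecke action on the Künneth decomposition of $H^3(X^3)$, of the bidegree constraints forcing $\Pi_{g,f}$ into the $(g,g,f)$-component, and of the normalisation of $\pi_F$ used to define $\Delta_{\gks}^F(e)$; once this identification is in place, the remaining steps are formal consequences of functoriality and Theorem~\ref{main}.
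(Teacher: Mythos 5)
Your overall strategy mirrors the paper's: push the torsion statement of Theorem~\ref{main} through the generalised modular parametrisation using functoriality of Abel--Jacobi maps with respect to correspondences, then invoke the fact that $\AJ_{E_f}$ is an isomorphism.

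However, there is a genuine gap in the middle step. The correspondence constructed in the paper is $\Pi_{[g],f}=\pr_{12}^*(t_{[g]})\cdot\pr_{34}^*(t_f)$, where $t_{[g]}=\sum_{h\in[g]}t_h$ is the $\Q$-rational idempotent projecting onto the \emph{entire Galois orbit} $[g]$; this is forced because $t_g$ alone is only defined over $K_g$, whereas $\Pi_{g,f}$ must lie in $\CH^2(X^3\times E_f)(\Q)$. Consequently $\Pi_{[g],f}$ factors through $t_{[g]}\otimes t_{[g]}\otimes t_f$, which cuts out \emph{all} the summands $h_1\otimes h_2\otimes f$ with $h_1,h_2\in[g]$, not just the single component $F=g\otimes g\otimes f$. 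Your claimed equality $\Pi_{g,f,*}(\AJ_{X^3}(\Delta_{\gks}(e)))=\Pi_{g,f,*}(\AJ_{X^3}(\Delta_{\gks}^F(e)))$ is therefore false whenever $K_g\neq\Q$, and the ``main obstacle'' you flag at the end cannot be resolved in the form you hope: there is no bidegree or K\"unneth constraint that singles out the $(g,g,f)$-component from among the $(h_1,h_2,f)$-components.

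The correct repair, which is what the paper does, is not to reduce to $\Delta_{\gks}^F(e)$ but to expand $t_{[g]}\otimes t_{[g]}\otimes t_f=\sum_{h_1,h_2\in[g]}t_{h_1}\otimes t_{h_2}\otimes t_f$ and apply Theorem~\ref{main} to \emph{each} triple $(h_1,h_2,f)$. This requires verifying that each of these triples satisfies Assumption~\ref{sign}. That follows because $a_p(g)\in\{\pm1\}$ is rational, hence Galois-invariant, so $a_p(h_1)=a_p(h_2)=a_p(g)$ and thus $W(h_1,h_2,f)=a_p(h_1)a_p(h_2)a_p(f)=a_p(f)=W(g,g,f)=W(F)=+1$. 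With this observation in place, the remaining steps of your argument (functoriality of $\AJ$ relative to $\Pi$, and the Abel--Jacobi isomorphism for $E_f$) go through exactly as you wrote them.
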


Theorem \ref{main2} with $e=\xi_\infty$ recovers a result of Daub \cite[Thm. 3.3.8]{daubthesis} by a different method in the case of prime level. 

\subsection{Strategy of the proof}

The key ingredient in the proof of Theorem \ref{main} is the Atkin--Lehner involution $w_p$ of $X$. The global root number of $W(F)$ is the product of the global root numbers of $f_1, f_2$, and $f_3$, which are each equal to the negative of their $w_p$-eigenvalue. As a consequence, the assumption that $W(F)$ equals $+1$ translates into information about the action of $w_p\times w_p\times w_p$ on $F$, and consequently on $\AJ_{X^3}(\Delta_{\gks}^F(e))$, as the latter lies in the $F$-isotypic Hecke component of the intermediate Jacobian by functoriality of the Abel--Jacobi map with respect to correspondences. The work of Mazur \cite{mazur78} provides necessary information about the rational points $X(\Q)$ and the action of $w_p$ on them.

\subsection{Composite square-free level}\label{s:squarefree}

The arguments of this paper carry over to the more general setting where the level $N$ is composite, but square-free. This is the situation initially considered in the work of Gross and Kudla \cite{grku}. It becomes necessary to replace the word eigenform by the word newform throughout the text. 

Let $f_1, f_2, f_3$ be three normalised newforms of weight $2$ and level $\Gamma_0(N)$, and let $F:=f_1\otimes f_2\otimes f_3$.
The level being square-free guarantees that the local root numbers $W_p(F)$ for $p\mid N$ are the products of the local root numbers at $p$ of $f_1, f_2$, and $f_3$, which are each the negative of their $w_p$-eigenvalue. The Atkin--Lehner correspondences $w_p$, $p\mid N$, commute with the good Hecke correspondences $T_n$ (i.e., with $(n, N)=1$), and this is sufficient for our purposes (see Remark \ref{rem:kani}). Assume that there exists $p\mid N$ for which $W_p(F)=-1$. Using multiplicity one for newforms, this assumption can be parlayed into information about the torsion properties of the images of modified diagonal cycles under Abel--Jacobi maps, as long as one has sufficient understanding of the action of the Atkin--Lehner involution $w_p$ on the rational points of $X_0(N)$. The only rational points on composite level modular curves $X_0(N)$ of genus $\geq 2$ are the rational cusps \cite{kenku}. It is known that the subgroup of the Jacobian $J_0(N)$ generated by the cusps is torsion by the Manin--Drinfeld theorem \cite{manin}. It follows that Theorem \ref{main} remains true for normalised newforms $f_1,f_2,f_3$ of composite square-free level under the assumption that $W_p(F)=-1$ for some $p\mid N$. 

The proof of Theorem \ref{main2} adapts verbatim to the setting of composite square-free level, provided that the eigenforms are newforms and $W_p(F)=-1$ for some $p\mid N$.  
This recovers \cite[Thm. 3.3.8]{daubthesis} by a different approach. 

Examining Stein's Table 2 in \cite[Appendix]{ddlr}, we obtain results similar to Corollary \ref{coromain}, e.g., in the following cases:
\begin{itemize}
\item $N=57$: $f$ corresponds to the elliptic curve with Cremona label $57$c and $g$ corresponds to the curves with labels $57$a or $57$b.
\item $N=58$: $f$ corresponds to the elliptic curve with Cremona label $58$b and $g$ corresponds to the curve with label $58$a.
\end{itemize}

\subsection{Related work}

The approach taken in this paper is explicit and elementary, exploiting the connection between triple product root numbers and eigenvalues of Atkin--Lehner involutions. A more powerful approach is considered in the work of Yuan, Zhang, and Zhang \cite{yzz}, using Prasad's dichotomy for the existence of trilinear forms on automorphic representations. Forthcoming work of Qiu and Zhang \cite{qiuzhang} further develops this approach and gives applications.

\subsection{Outline}

Background on cusp forms of weight $2$ is recalled in Section \ref{s:cuspform}. Section \ref{s:triple} recalls facts about the triple product $L$-function and states the Beilinson--Bloch conjecture in this setting. Section \ref{s:AJ} constitutes the proof of Theorem \ref{main}. The application to Chow--Heegner points is given in Section \ref{s:CH}. Corollary \ref{coromain} is discussed in Remark \ref{rem:stein}.

\subsection{Notational conventions}\label{s:notation}

Fix a complex embedding $\bar{\Q}\hookrightarrow \C$, as well as $p$-adic embeddings $\bar{\Q}\hookrightarrow \C_p$ for each rational prime $p$. In this way, all finite extensions of $\Q$ are viewed simultaneously as subfields of $\C$ and $\C_p$. The subscript $\Q$ will denote the tensor product with $\Q$ over $\Z$. Given two varieties $X$ and $Y$, we write $\corr^{r}(X, Y):=\CH^{\dim X+r}(X\times Y)$ for the group of correspondences of degree $r$, where $\CH$ denotes the Chow group. We use the same conventions and notations for motives as in \cite[\S 0]{deligne}, and we denote by $\Chow(\Q)$ the category of pure Chow motives over $\Q$.

\section{Cusp forms}\label{s:cuspform}

Let $p> 3$ be a rational prime. Let $Y:=Y_0(p)$ be the modular curve over $\Q$ for the congruence subgroup $\Gamma_0(p)\subset \SL_2(\Z)$ consisting of matrices, which are upper-triangular modulo $p$. It admits a canonical proper desingularisation $Y_0(p)\hookrightarrow X_0(p)$, obtained over the complex numbers by adjoining the cusps. The curve $X:=X_0(p)$ is a geometrically connected, smooth, and proper curve over $\Q$. It is the coarse moduli scheme representing pairs $(E, H)$ consisting of a generalised elliptic curve $E$ over a $\Q$-scheme $S$, together with a cyclic subgroup scheme $H$ of order $p$. It admits a uniformisation by the extended Poincar\'e upper half-plane 
\begin{equation}\label{uniformisation}
\mathcal{H}^*\lra X(\C), \qquad \tau\mapsto (\C/\Z\oplus \tau\Z, \langle1/p +\Z\oplus \tau\Z\rangle),
\end{equation}
which identifies $X(\C)$ with the quotient $\Gamma_0(p)\backslash\mathcal{H}^*$.
There are two cusps $\xi_\infty$ and $\xi_0$ on $X$, which correspond via \eqref{uniformisation} to the points $i\infty$ and $0$ of $\cH^*$.
The genus $g_X$ of $X$ is given by the formula 
\begin{equation}\label{g0}
g_X=
\begin{cases}
\lfloor \frac{p+1}{12}\rfloor-1, & \text{ if } p\equiv 1 \pmod {12} \\
\lfloor \frac{p+1}{12}\rfloor, & \text{ otherwise.}
\end{cases}
\end{equation}

The space $S_2(\Gamma_0(N))$ of weight $2$ cusp forms of level $\Gamma_0(p)$ is naturally identified with the space of global sections of the sheaf of regular differential $1$-forms on $X$ via the isomorphism
\begin{equation}\label{cuspH}
S_2(\Gamma_0(p))\overset{\sim}{\lra} H^0(X_{\C}, \Omega^1_{X}), \qquad f\mapsto \omega_f:=2\pi i f(\tau)d\tau.
\end{equation}
In particular, the dimension of $S_2(\Gamma_0(p))$ is equal to $g_X$. 

\subsection{Hecke operators}\label{s:hecke}

The curve $X$ is equipped with the usual collection of Hecke correspondences, which act on cohomology and give rise to operators on $S_2(\Gamma_0(p))$ via \eqref{cuspH}. These correspondences and their induced operators are denoted by $U_p$ and $T_n$, for integers $n\geq 1$ coprime to $p$. Their precise definition can be found in \cite[(3.1)]{atkinlehner}. 

The curve $X$ also comes equipped with the Atkin--Lehner involution $w_p$. It is defined, following the moduli description, by mapping a $p$-isogeny $\phi : E\lra E'$ of elliptic curves to its dual isogeny $\phi^\vee : E'\lra E$. In terms of covering spaces, using \eqref{uniformisation}, it is given by $\tau\mapsto -\frac{1}{p\tau}$, where $\tau\in \cH$. This involution is defined over $\Q$ and thus maps $\Q$-rational points of $X$ to $\Q$-rational points. It induces, via \eqref{cuspH}, an operator on $S_2(\Gamma_0(p))$, which we also denote by $w_p$.

The operators $T_m$, with $(m, p)=1$, on $S_2(\Gamma_0(p))$ commute with the operators $T_n$, $U_p$ and $w_p$ by \cite[Lem. 17]{atkinlehner}. 
Let $\T:=\T(p)$ denote the $\Q$-algebra generated by the operators $T_n$, with $(n,p)=1$. The space of cusp forms $S_2(\Gamma_0(p))$ admits a basis of eigenforms for $\T$ \cite[Thm. 2]{atkinlehner}. 

Let $f = \sum_{n\geq 1} a_n(f)q^n \in S_2(\Gamma_0(p))$ be a normalised eigenform, in the sense that $a_1(f)=1$. Because the level is prime, there are no oldforms. As a consequence, we have the equality of operators $U_p=-w_p$. In particular, the operators $U_p$ and $w_p$ commute. Note that this is only the case for general composite level when restricting to newforms \cite[Lem. 17]{atkinlehner}. 
It follows that $w_p(f)=-a_p(f)f$. In particular, we have $a_p(f)\in \{ \pm 1 \}$. 

The normalised eigenform $f$ determines a surjective algebra homomorphism $\lambda_f : \T \lra K_f$ by sending $T_n$ to $a_n(f)$. Here $K_f$ is the totally real finite extension of $\Q$ generated by the Fourier coefficients $a_n(f)$ of $f$.  

Let $S_2(\Gamma_0(p))_f$ denote the $f$-isotypic component of $S_2(\Gamma_0(p))$ consisting of cusp forms $f'$ in $S_2(\Gamma_0(p))$ such that $T(f')=\lambda_f(T) f'$, for all $T\in \T$. By the theorem of multiplicity one \cite[Lem. 20 \& 21]{atkinlehner} of Atkin and Lehner for newforms, the space $S_2(\Gamma_0(p))_f$ is $1$-dimensional over $\C$.
We have the spectral decomposition 
\[
S_2(\Gamma_0(p)) = \bigoplus_{h} S_2(\Gamma_0(p))_h,
\] 
where the sum is taken over all normalised eigenforms $h\in S_2(\Gamma_0(p))$.
Since the dual space $S_2(\Gamma_0(p))^\vee$ is a free $\T_{\C}$-module of rank one by multiplicity one, we similarly obtain a decomposition 
\[
\T_{\C}=\bigoplus_h \T_{\C, h},
\]
where $\T_{\C, h}$ denotes the algebra of Hecke operators $T_n$, with $(n, p)=1$, acting on $S_2(\Gamma_0(p))_h$, which is again a $\C$-vector space of dimension one. 

Let $[f]$ denote the $\Gal(\bar{\Q}/\Q)$ orbit of $f$. Form the $\C$-vector space $\bigoplus_{g\in [f]} S_{2}(\Gamma_0(p))_g$ of dimension $d_f:=[K_f : \Q]$, and consider the $\Q$-subspace $S_2(\Gamma_0(p))_{[f]}$ of forms with rational coefficients. This $\Q$-vector space is stable under the action of $\T_{\Q}$, and we let $\T_{\Q, [f]}$ denote the $\Q$-algebra generated by the Hecke operators acting on $S_2(\Gamma_0(p))_{[f]}$. We then have the decomposition 
\begin{equation}\label{heckedecomp}
\T=\bigoplus_{[h]} \T_{\Q, [h]} \simeq \bigoplus_{[h]} K_h,
\end{equation}
where the sum is taken over all $\Gal(\bar{\Q}/\Q)$ conjugacy classes of normalised eigenforms in $S_2(\Gamma_0(p))$.

Let $\End_\Q(J)$ denote the ring of endomorphisms defined over $\Q$ of the Jacobian $J:=\Pic_{X/\Q}^0$ of $X$, and let $\End_\Q^0(J):=\End_\Q(J)\otimes \Q$. As $p$ is prime, we have $\End_\Q^0(J)= \T$ \cite[Cor. 3.3]{ribet75}. To summarise, we have the decomposition 
\begin{equation}\label{hecke}
\End_\Q^0(J)= \T\simeq \bigoplus_{[h]} K_h.
\end{equation}

It will be useful to remark that there is a natural isomorphism 
\begin{equation}\label{endo}
\End_\Q^0(J)\simeq (\CH^1(X^2)\otimes \Q)/(\pr_1^* \CH^1(X)\otimes \Q+\pr_2^* \CH^1(X)\otimes \Q).
\end{equation}
See for instance \cite[Thm. 11.5.1]{langebirkenhake}.

\begin{remark}\label{rem:kani}
The exposition is simplified by the assumption that the level is prime, which implies that there are no oldforms. We refer to \cite[\S 1.6]{ddt} for the decomposition \eqref{heckedecomp} in the more general setting of composite level $N$. In this case, the algebra $\End^0_\Q(J)$ is a product of matrix algebras. It contains $\T$ as its center and the full Hecke algebra as a maximal commutative subalgebra. Moreover, $\End^0_\Q(J)$ is generated as a $\Q$-algebra by $\T$, together with certain degeneracy operators \cite[Thm. 1]{kani}.
\end{remark}

\subsection{Hecke projectors}\label{s:motivef}

Let $f = \sum_{n\geq 1} a_n(f)q^n \in S_2(\Gamma_0(p))$ be a normalised eigenform. Denote by $V:=S_2(\Gamma_0(p))^\vee$ the $\C$-dual of $S_2(\Gamma_0(p))$. The complex points of the Jacobian $J$ are 
\[
J(\C)=H^0(X, \Omega_{X}^1)^\vee \slash \im H_{1}(X(\C), \Z),
\]
where $\Lambda:=\im H_{1}(X(\C), \Z)$ is viewed as a lattice via integration of differential forms. By \eqref{cuspH}, we thus have an identification $J(\C)=V/\Lambda$ as a $g_X$-dimensional complex torus, where we recall that $g_X$ is the genus of $X$. Let $V_f$ be the subspace of $V$ on which $\T$ acts via the homomorphism $\lambda_f : \T \lra K_f$, and let $\pi_f : V\lra V_f$ be the orthogonal projection with respect to the Petersson scalar product. The projector $\pi_f$ naturally belongs to $\T_{K_f}=\T\otimes_{\Q} K_f$, and by \eqref{hecke} and \eqref{endo} we may view $\pi_f$ as an idempotent correspondence $t_f \in \corr^0(X, X)_{K_f}$. 

\begin{definition}\label{def:motivef}
The motive $M(f):=(X, t_f, 0)\in \Chow(\Q)_{K_f}$ over $\Q$ with coefficients in $K_f$ is the motive of $f$.
\end{definition}

For a prime $\ell$, the $\ell$-adic realisation of $M(f)$ is given by
\begin{equation}\label{repf}
M(f)_\ell=(t_f)_* H^*_{\et}(X_{\bar{\Q}}, \Q_\ell) = H^1_{\et}(X_{\bar{\Q}}, \Q_\ell)_f = V_\ell(f),
\end{equation}
where $V_\ell(f)$ is the $2$-dimensional $\ell$-adic Galois representation associated with $f$ given the fixed field embeddings of Section \ref{s:notation}. 

If we let $V_{[f]}:=\bigoplus_{g\in [f]} V_g$ and $\pi_{[f]}:=\sum_{g\in [f]} \pi_f$, then $\pi_{[f]}$ is the orthogonal projection $V\lra V_{[f]}$ with respect to the Petersson scalar product. The abelian variety $A_{[f]}$ associated with the Galois orbit $[f]$ by Eichler and Shimura \cite{shimura58}, is isomorphic over $\C$ to the complex torus $V_{[f]}/\pi_{[f]}(\Lambda)$ \cite[Lem. 1.46]{ddt}. Moreover, the projection map $\pi_{[f]} : V/\Lambda \lra V_{[f]}/\pi_{[f]}(\Lambda)$ corresponds to the natural quotient $J\lra A_{[f]}$. In particular, $\pi_{[f]}$ naturally belongs to $\T=\End_\Q^0(J)$, and corresponds under \eqref{hecke} to the idempotent element $e_{[f]}\in  \bigoplus_{[h]} K_h$ which has $1$ as $[f]$-coordinate and $0$ as $[h]$-coordinate for $[h]\neq [f]$. By \eqref{endo}, we may view $\pi_{[f]}$ as an idempotent correspondence $t_{[f]} \in \corr^0(X, X)_\Q$. It follows that the motive $M([f]):=(X, t_{[f]}, 0)\in \Chow(\Q)_\Q$ is equal to the modular abelian variety $A_{[f]}$. 

\section{Triple products}\label{s:triple}

Let 
$
f_1=\sum_{n\geq 1} a_n(f_1)q^n, f_2=\sum_{n\geq 1} a_n(f_2)q^n,$ and $f_3=\sum_{n\geq 1} a_n(f_3)q^n
$
be three normalised cuspidal eigenforms of weight $2$ and level $\Gamma_0(p)$, and let $F:=f_1\otimes f_2\otimes f_3$ be the associated cusp form of weight $(2, 2, 2)$ for $\Gamma_0(p)^3$. Let $K_F = K_{f_1}\cdot K_{f_2} \cdot K_{f_3}$ denote the compositum of the Hecke fields of the forms $f_1, f_2,$ and $f_3$. Using the Hecke isotypic projectors of Section \ref{s:motivef}, we define the idempotent correspondence 
\begin{equation}\label{tF}
t_F := t_{f_1} \otimes t_{f_2} \otimes t_{f_3}=\pr_{14}^*(t_{f_1}) \cdot \pr_{25}^*(t_{f_2})\cdot \pr_{36}^*(t_{f_3}) \in \corr^0(X^3, X^3)_{K_F},
\end{equation}
where $\pr_{ij} : X^6 \lra X^2$ denotes the natural projection to the $i$-th and $j$-th components.

\begin{definition}
The motive of the triple product $F$ is defined as the motive $$M(F):=M(f_1)\otimes M(f_2)\otimes M(f_3)=(X_0(p)^3, t_F, 0)\in \Chow(\Q)_{K_F}$$ over $\Q$ with coefficients in $K_F$.
\end{definition}

The $\ell$-adic realisations of $M(F)$ give rise to a compatible family of $8$-dimensional $\ell$-adic Galois representations 
\[
\{ V_\ell(F):=M(F)_\ell=V_\ell(f_1)\otimes V_\ell(f_2)\otimes V_\ell(f_3) \}_\ell,
\]
where the representations $V_\ell(f_{i})$ for $i\in \{ 1,2,3 \}$ are the ones appearing in \eqref{repf}. 

The de Rham realisation is given by 
\[
M(F)_{\dR}=H^1(X_0(p)(\C), \C)_{f_1}\otimes H^1(X_0(p)(\C), \C)_{f_2}\otimes H^1(X_0(p)(\C), \C)_{f_3}.
\]
It follows that the Hodge numbers of $M(F)$ are 
\begin{equation}\label{hnF}
h^{3,0}(M(F))=h^{0, 3}(M(F))=1 \qquad \text{ and } \qquad h^{2,1}(M(F))=h^{1, 2}(M(F))=3.
\end{equation}

\subsection{Triple product $L$-functions}\label{s:tripleLfunction}

Following for instance \cite{del73}, we attach to the motive of $F$ the $L$-function 
\[
\Lambda(M(F)/\Q, s):=\prod_{v} L(\sigma'_{F, v}, s)=L(\sigma'_{F, \infty}, s)\prod_{q} L(\sigma'_{F, q}, s),
\]
where $\sigma'_{F, v}$ is the Weil--Deligne representation attached to $F$ at the place $v$.
This is the triple product $L$-function associated with $f_1, f_2$, and $f_3$ and studied in \cite{grku}.

\begin{notation}
We will alternatively write $\Lambda(F, s)$ or $\Lambda(f_1, f_2, f_3, s)$ for this $L$-function. Similarly, we write $L(F,s)$ or $L(f_1, f_2, f_3, s)$ for the finite part $\prod_{q} L(\sigma'_{F, q}, s)$ and also refer to this as the triple product $L$-function.
\end{notation}

Using explicit descriptions of the Weil--Deligne representations of $F$, one can work out expressions for the local factors at the finite primes \cite[(1.7), (1.8)]{grku}.
At primes $q\neq p$ these factors are of degree $8$, while the factor at $p$ is of degree $3$. 
Following the prescription of \cite{del73} and using \eqref{hnF}, the local $L$-factor at infinity is given by 
\[
L(\sigma'_{F, \infty}, s)=
2^4(2\pi)^{3-4s}\Gamma(s-1)^3\Gamma(s).
\]
Letting
$\Lambda^*(F, s):=(p^5)^{\frac{s}{2}}\Lambda(F, s)$,
general conjectures \cite{del73} predict that this $L$-function admits an analytic continuation to the entire complex plane, and satisfies the functional equation 
\begin{equation}\label{eq:FE}
\Lambda^*(F, s)=W(F) \cdot\Lambda^*(F, 4-s),
\end{equation}
where $W(F)\in \{ \pm 1 \}$ is the global root number of the motive $M(F)$. 
The analytic continuation of $\Lambda^*(F, s)$, as well as the functional equation \eqref{eq:FE}, have been established by Gross and Kudla in the more general setting of square-free composite level \cite[Prop 1.1]{grku}. The global root number, as stated in \cite[\S 1]{grku}, is given by 
\begin{equation}\label{rem:tripleroot}
W(F)=a_p(f_1)a_p(f_2)a_p(f_3).
\end{equation}
The latter can be established by computing the local epsilon factors using explicit descriptions of the Weil--Deligne representations of $M(F)$. This is done in detail for instance in \cite[Prop. 4.5]{mythesis}.

\subsection{The Beilinson--Bloch conjecture}

The centre of symmetry of the functional equation \eqref{eq:FE} is the point $s=2$, at which $L(F, s)$ has no pole. Moreover, $L(\sigma'_{\infty, F}, s)$ has neither zero nor pole at $s=2$, so the centre is a critical point, and we have 
\begin{equation}\label{parity}
W(F)=(-1)^{\ord_{s=2} L(F, s)}.
\end{equation}
The Beilinson--Bloch conjecture \cite{bloch} predicts in this setting that 
\begin{equation}\label{bb}
\ord_{s=2} L(F, s)=\dim_{K_F} \: (t_F)_* (\CH^2(X^3)_0(\Q) \otimes K_F),
\end{equation}
where $\CH^2(X^3)_0$ is the Chow group of null-homologous algebraic cycles of codimension $2$ on $X^3$ modulo rational equivalence.

In the case when $W(F)=+1$, Gross and Kudla proved a formula for the central value $L(F,2)$, expressing it as a product of a complex period and an algebraic number \cite[Prop. 10.8]{grku}. This algebraic number admits an explicit description in terms of the coefficients of the Jacquet--Langlands transfers of $f_1, f_2,$ and $f_3$ to the definite quaternion algebra ramified at $p$ and $\infty$. 

In the case when $W(F)=-1$, the $L$-function $L(F, s)$ vanishes to odd order at its centre $s=2$. By \eqref{bb}, we expect the $F$-isotypic component of $\CH^2(X^3)_0(\Q)\otimes K_F$ to have dimension greater or equal to $1$. 
A natural element of $\CH^2(X^3)_0(\Q)$ to consider is the modified diagonal cycle, also referred to as the Gross--Kudla--Schoen cycle. Let $\Delta$ denote the image of $X$ under the diagonal embedding $X\lra X^3$, i.e., 
\begin{equation}\label{def:delta}
\Delta=\{ (x,x,x) \: \vert \: x\in X \} \subset X^3.
\end{equation}
In order to get a null-homologous cycle, we apply a projector to $\Delta$, which was originally considered by Gross and Kudla \cite{grku}, and defined in full generality by Gross and Schoen \cite{grsc}.

\begin{definition}\label{def:pgks}
Let $C$ be a smooth, projective, and geometrically connected curve over a number field $k$, and let $e$ be a $k$-rational point of $X$. For any non-empty subset $T$ of $\{ 1, 2, 3 \}$, let $T'$ denote the complementary set. Write $p_T : C^3\lra C^{\vert T \vert}$ for the natural projection map and let $q_T(e) : C^{\vert T\vert}\lra C^3$ denote the inclusion obtained by filling in the missing coordinates using the point $e$. Let $P_{T}(e)$ denote the graph of the morphism $q_{T}(e)\circ p_T : C^3 \lra C^3$ viewed as a codimension $3$ cycle on the product $C^3\times C^3$. Define the Gross--Kudla--Schoen projector 
\[
P_{\gks}(e):=\sum_T (-1)^{\vert T'\vert} P_T(e) \in \CH^3(C^3\times C^3),
\]
where the sum is taken over all subsets of $\{ 1, 2, 3 \}$. This is an idempotent in the ring of correspondences of $C^3$ with the property that it annihilates the cohomology groups $H^{i}(C^3(\C), \Z)$, for $i\in \{ 4,5,6 \}$, and maps $H^3(C^3(\C), \Z)$ onto the K\"unneth summand $H^1(C(\C), \Z)^{\otimes 3}$ \cite[Cor. 2.6]{grsc}.
\end{definition}

Given a rational point $e\in X(\Q)$, the Gross--Kudla--Schoen cycle based at $e$ is defined as
\begin{equation}\label{def:gkscycle}
\Delta_{\gks}(e):=P_{\gks}(e)_*(\Delta) \in \CH^2(X^3)_0(\Q).
\end{equation}
Note that $\Delta_{\gks}(e)$ is in fact null-homologous, as $P_{\gks}(e)$ annihilates $H^4(X^3(\C), \Z)$, the target of the cycle class map. 

Gross and Kudla \cite[Conj. 13.2]{grku} conjectured the formula 
\begin{equation}\label{conj:grku}
\frac{L'(F, 2)}{\Omega_F} = \langle (t_F)_* (\Delta_{\gks}(\xi_\infty)), (t_F)_* (\Delta_{\gks}(\xi_\infty)) \rangle^{\rm BB}, 
\end{equation}
where $\langle \: , \: \rangle^{\rm BB} : \CH^2(X^3)_0(\Q)\otimes \R \times \CH^2(X^3)_0(\Q) \otimes \R \lra \R$ denotes the Beilinson--Bloch height pairing \cite[(13.9)]{grku}. A proof of \eqref{conj:grku} due to Yuan, Zhang, and Zhang was announced in \cite{yzz}, but has not yet been published.

\section{Abel--Jacobi maps}\label{s:AJ}

Let $f_1, f_2,$ and $f_3$ be three normalised eigenforms in $S_2(\Gamma_0(p))$, and let $F=f_1\otimes f_2\otimes f_3$. We work under the following assumption on the sign of the functional equation (\ref{eq:FE}). 
\begin{assumption}\label{sign} 
$W(F)=+1. $
\end{assumption}

Under this assumption, by \eqref{parity}, the $L$-function $L(F, s)$ vanishes to even order at the central critical point $s=2$, and we have at our disposal the Gross--Kudla formula \cite[Prop. 10.8]{grku}, which gives an explicit expression for the central value $L(F,2)$.
Under Assumption \ref{sign}, the Beilinson--Bloch conjecture \eqref{bb} predicts that the algebraic rank of the $F$-isotypic component of $\CH^2(X_0(p)^3)_0(\Q)$ is even. Comparing with the situation of Heegner points on modular curves studied in \cite{GZ}, it seems reasonable to expect that the $F$-isotypic component of $\Delta_{\gks}(e)$ is torsion, for all $e\in X(\Q)$. While this appears to be difficult to show directly in the Chow group, we can prove the corresponding statement for the image of the cycle under the complex Abel--Jacobi map 
\begin{equation}\label{torsiontripleCAJ}
\AJ_{X^3} : \CH^2(X^3)_0(\C)\lra J^2(X^3/\C) :=\frac{\Fil^{2} H^{3}_{\dR}(X^3/\C)^\vee}{\im H_{3}(X^3(\C), \Z)},
\end{equation}
whose target is the second Griffiths intermediate Jacobian of $X^3$.
This map is a higher dimensional generalisation of the familiar Abel--Jacobi isomorphism for curves. It is defined by the integration formula 
\[
\AJ_{X^3}(Z)(\alpha):=\int_{\partial^{-1}(Z)} \alpha, \qquad \text{for all } \alpha\in \Fil^{2} H^{3}_{\dR}(X^3/\C),
\]
where $\partial^{-1}(Z)$ denotes any continuous $3$-chain in $X^3(\C)$ whose image under the boundary map $\partial$ is $Z$.
The aim of this section is to prove the following main result.
\begin{theorem}\label{GKStorsion}
Let $f_1, f_2, f_3$ be three normalised eigenforms in $S_2(\Gamma_0(p))$, let $F=f_1\otimes f_2\otimes f_3$ be their triple product, and suppose that $F$ satisfies Assumption \ref{sign}.
Then $\AJ_{X^3}((t_F)_*(\Delta_{\gks}(e)))$ is torsion in $J^2(X^3/\C)$, for all $e\in X(\Q)$.
\end{theorem}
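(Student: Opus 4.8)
The plan is to exploit the Atkin--Lehner involution $w:=w_p$ of $X$ acting diagonally on the three factors. Put $w^{(3)}:=w\times w\times w\in\Aut(X^3)$, an automorphism defined over $\Q$. Since $w$ commutes with every $T_n$, $(n,p)=1$, it commutes with the idempotents $t_{f_i}$, hence with $t_F$, as operators on cohomology; consequently $(w^{(3)})_*$ preserves the $F$-isotypic piece $J^2(X^3/\C)_F:=(t_F)_*J^2(X^3/\C)$ (working throughout with $K_F$-coefficients). On $M(F)$ the correspondence $w^{(3)}$ acts by the scalar $(-a_p(f_1))(-a_p(f_2))(-a_p(f_3))=-W(F)$, using $w_p(f_i)=-a_p(f_i)f_i$ and \eqref{rem:tripleroot}; under Assumption \ref{sign} this scalar is $-1$, so $(w^{(3)})_*$ acts as $-1$ on all of $J^2(X^3/\C)_F$. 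On the other hand $w^{(3)}$ fixes the diagonal $\Delta$, and conjugation by $w^{(3)}$ carries each graph correspondence $P_T(e)$ to $P_T(we)$, hence $P_{\gks}(e)$ to $P_{\gks}(we)$; therefore $(w^{(3)})_*\Delta_{\gks}(e)=\Delta_{\gks}(we)$ in $\CH^2(X^3)_0(\Q)$. Combining these using functoriality of $\AJ_{X^3}$ with respect to the correspondences $w^{(3)}$ and $t_F$, and the fact that $\AJ_{X^3}((t_F)_*\Delta_{\gks}(e))$ lies in $J^2(X^3/\C)_F$, I obtain
\[
\AJ_{X^3}((t_F)_*\Delta_{\gks}(we))=(w^{(3)})_*\AJ_{X^3}((t_F)_*\Delta_{\gks}(e))=-\AJ_{X^3}((t_F)_*\Delta_{\gks}(e)).
\]

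The second ingredient is that $\AJ_{X^3}((t_F)_*\Delta_{\gks}(e))$ depends on $e\in X(\Q)$ only up to torsion. Expanding $\Delta_{\gks}(e)=\sum_T(-1)^{|T'|}P_T(e)_*(\Delta)$ and subtracting, one sees that for $e,e'\in X(\Q)$ the difference $\Delta_{\gks}(e)-\Delta_{\gks}(e')$ is a $\Z$-linear combination of cycles of two kinds: (a) the product of the diagonal of two of the factors with the degree-zero $0$-cycle $D:=e-e'$ in the remaining factor, and (b) products of a whole factor $X$ with $0$-cycles in the other two factors. Apply $(t_F)_*$. Since each $t_{f_i}$ kills $H^0(X)$ and $H^2(X)$, one has $(t_{f_i})_*[X]=0$ in $\CH^0(X)_\Q$, so every cycle of kind (b) is annihilated; and $t_{f_i},t_{f_j}$ acting on the diagonal of the $i$-th and $j$-th factors kill it unless $f_i=f_j$, in which case the output is the correspondence $t_{f_i}$, the identity of $M(f_i)$. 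Hence $(t_F)_*(\Delta_{\gks}(e)-\Delta_{\gks}(e'))$ is a combination of product cycles $t_{f_i}\times(t_{f_k})_*D$ indexed by the pairs with $f_i=f_j$ and $\{i,j,k\}=\{1,2,3\}$. For any such pair, $W(F)=W(f_i)W(f_j)W(f_k)=W(f_k)$, so Assumption \ref{sign} forces $W(f_k)=+1$. It then remains to check that $(t_{f_k})_*D$ is torsion for all $e,e'\in X(\Q)$: if $e,e'$ are cusps this is the Manin--Drinfeld theorem \cite{manin}, as $\xi_0-\xi_\infty$ is torsion in $J$; and if $e$ is non-cuspidal, Mazur's determination of $X_0(p)(\Q)$ \cite{mazur78} confines $p$ to a finite set of primes, for which the Mordell--Weil rank of $J_0(p)$ is at most $1$ and, when positive, carried by an elliptic curve factor of root number $-1$ --- which $A_{[f_k]}$ is not, as $W(f_k)=+1$ --- so $A_{[f_k]}(\Q)$ is finite and $(t_{f_k})_*D$ is torsion. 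Since the cycle $t_{f_i}\times(t_{f_k})_*D$ is bilinear in its two factors, it is then torsion in $\CH^2(X^3)_0$; hence so is $(t_F)_*(\Delta_{\gks}(e)-\Delta_{\gks}(e'))$, and its image under $\AJ_{X^3}$ is torsion.

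Taking $e'=we$ and combining the two ingredients yields that $2\,\AJ_{X^3}((t_F)_*\Delta_{\gks}(e))=\AJ_{X^3}((t_F)_*\Delta_{\gks}(e))-\AJ_{X^3}((t_F)_*\Delta_{\gks}(we))$ is torsion in $J^2(X^3/\C)$; since $2x$ torsion implies $x$ torsion in any abelian group, $\AJ_{X^3}((t_F)_*\Delta_{\gks}(e))$ is torsion, and this holds for every $e\in X(\Q)$. I expect the main obstacle to be the base-point analysis of the second paragraph: identifying precisely which components survive $(t_F)_*$ --- in particular that the diagonal terms $t_{f_i}$ do \emph{not} vanish on cohomological grounds when two of the $f_i$ coincide --- and controlling $(t_{f_k})_*(e-e')$ for the finitely many levels $p$ carrying non-cuspidal rational points, where Mazur's theorem and the rank/root-number structure of $J_0(p)$ enter. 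The $w^{(3)}$-eigenvalue computation and the Abel--Jacobi functoriality are essentially formal, though a sign or normalisation slip there would be fatal.
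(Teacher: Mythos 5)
Your proposal is essentially correct and shares with the paper its central mechanism: the Atkin--Lehner involution $w_p$ acts as $-W(F)=-1$ on the $F$-isotypic piece, yielding $\AJ_{X^3}((t_F)_*\Delta_{\gks}(w_p e))=-\AJ_{X^3}((t_F)_*\Delta_{\gks}(e))$, which is precisely Proposition~\ref{prop:AJup}. Where you diverge is in the base-point analysis. The paper uses the Gross--Schoen correspondence $\Delta_{\gks}\in\corr^{1}(X,X^3)$ from \cite[Prop.~3.6]{grsc}, whose push-forward sends the divisor class $[e]-[e']\in J(\Q)$ to $\Delta_{\gks}(e)-\Delta_{\gks}(e')$; this reduces everything to showing $[e]-[w_pe]$ is torsion in $J(\Q)$, which follows from Manin--Drinfeld when $e$ is a cusp, is trivial when $e$ is $w_p$-fixed (covering $p\in\{43,67,163\}$ by Mazur), and for $p=37$ is deduced by relating $\gamma_0-\gamma_\infty$ to $\xi_0-\xi_\infty$ via the hyperelliptic involution. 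Your K\"unneth-style decomposition of $\Delta_{\gks}(e)-\Delta_{\gks}(e')$ followed by push-forward under $t_F$ is a finer version of this, reducing only to torsion of $(t_{f_k})_*(e-e')$ in the isogeny factor $A_{[f_k]}(\Q)$ rather than in all of $J(\Q)$; this is correct, though it silently requires choosing representatives of $t_{f_i}\in\CH^1(X^2)_{K_{f_i}}$ that annihilate $H^0$ and $H^2$ (the paper defines $t_{f_i}$ only modulo $\pr_1^*\CH^1(X)+\pr_2^*\CH^1(X)$, so some care is needed).

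The one genuine soft spot is your unqualified assertion that for $p\in\{37,43,67,163\}$ the Mordell--Weil rank of $J_0(p)$ is at most $1$ and carried on a root-number-$(-1)$ elliptic factor. This is an empirical claim that needs a citation or verification; it is not self-evident for $p=163$, where $X_0(163)$ has genus $13$ and $J_0(163)$ has several isogeny factors. You can bypass it cheaply: for $p\in\{43,67,163\}$ the non-cuspidal rational point is fixed by $w_p$, so $e-w_pe=0$ and there is nothing to show; for $p=37$ the only form with $W(f_k)=+1$ is the rank-zero form $37$b, so $A_{[f_k]}(\Q)$ is finite (well-known in this genus-$2$ case). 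With that patch your argument closes. Net assessment: same core Atkin--Lehner idea, a more intricate but more modular treatment of the base-point dependence; the paper's route through the Gross--Schoen correspondence \eqref{grsccor} is cleaner and gives sharper torsion bounds ($2n$ with $n=$ numerator of $(p-1)/12$ for the cusps), while yours avoids the explicit hyperelliptic-involution computation at $p=37$ at the cost of a rank input.
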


\begin{remark}\label{rem:GKStorsion}
Similar arguments to the ones presented in the proof of Theorem \ref{GKStorsion} below can be used to prove that the image of $(t_F)_*(\Delta_{\gks}(e))$ under Bloch's \cite{bloch} $\ell$-adic \'etale Abel--Jacobi map 
\begin{equation}\label{tripleet}
\AJ^{\et}_{X^3} : \CH^2(X^3)_0(\Q) \lra H^1(\Q, H^{3}_{\et}(X^3_{\bar{\Q}}, \Q_\ell(2)))
\end{equation}
is torsion, when the global root number is $W(F)=+1$. 
It is conjectured that for any smooth proper variety over a number field, and for any prime $\ell$, the $\ell$-adic Abel--Jacobi maps in any codimension are injective up to torsion \cite[Conj. 9.15]{jannsen}. Thus, conditional on this conjecture, $(t_F)_*(\Delta_{\gks}(e))$ is torsion in the Chow group.
\end{remark}

The rest of this section constitutes the proof of Theorem \ref{GKStorsion}.
We distinguish different situations depending on the genus $g_X$ of $X$, which we recall is given by the formula (\ref{g0}).
The curve $X$ has genus zero exactly when $p\in \{ 2, 3, 5, 7, 13\}$. In this case, the space of cusp forms $S_2(\Gamma_0(p))$ is trivial, and there is no triple product $L$-function to consider in the first place. We have $\Delta_{\gks}(e)=0$ in $\CH^2(X^3)_0(\Q)$, as the cycle class map is injective in this case \cite[Prop. 4.1]{grsc}.

\subsection{The genus one case}

Suppose that $g_X=1$, i.e., $p\in \{ 11, 17, 19\}$. 
In this case, $X$ is an elliptic curve over $\Q$ of Mordell--Weil rank $0$. 
For all $e\in X(\Q)$, we have $6\Delta_{\gks}(e)=0$ in $\CH^2(X^3)_0(\Q)$ \cite[Cor. 4.7]{grsc}.
On the $L$-function side, $f_1=f_2=f_3=f$ is the normalised eigenform corresponding to the elliptic curve $X$. By \cite[$(11.8)$]{grku} the triple product $L$-function decomposes as 
\[
L(F, s)=L(\sym^3 f, s)L(f, s-1)^2.
\]
Note that $W(F)=a_p(f)^3=a_p(f)=W(f)=+1$ by \eqref{rem:tripleroot} and the fact that the sign of the functional equation of $L(f, s)$ centred at $s=1$ is equal to $+1$, since $X$ has Mordell--Weil rank $0$. 
For each $p\in \{ 11, 17, 19\}$, we have $L(F, 2)\neq 0$ \cite[Tables 12.5, 12.6, \& 12.7]{grku}. In other words, $\ord_{s=2}(L(F, s))=0$. The fact that $\Delta_{\gks}(e)$ is torsion in the Chow group is therefore consistent with the Beilinson--Bloch conjecture \eqref{bb}. 

\subsection{The higher genus case}

Suppose that $g_X\geq 2$. It will be convenient to sometimes view the Atkin--Lehner involution $w_p$ of Section \ref{s:hecke} as a correspondence by taking its graph. By slight abuse of notation, we will write $w_p \in \corr^0(X, X)$. The operator $w_p$ naturally belongs to the Hecke algebra $\T$, by \eqref{hecke} and \eqref{endo}, and commutes with the Hecke operators. 
The modular forms $f_j$, with $j\in \{1, 2, 3\}$, are eigenforms for the operator $w_p$ with eigenvalues given by $-a_p(f_j)$ respectively (see Section \ref{s:hecke}). 

Consider the involution $u_p:=w_p\times w_p\times w_p$ of $X^3$. By taking its graph, it may be viewed as a correspondence, and we write again $u_p\in \corr^0(X^3, X^3)$, by slight abuse of notation. Note that, as correspondences, we have
 \[
 u_p = w_p\otimes w_p \otimes w_p :=\pr_{14}^*(w_p)\cdot \pr_{25}^*(w_p)\cdot \pr_{36}^*(w_p) \in \corr^{0}(X^3, X^3).
 \]
 The map $u_p$ induces an involution on cohomology via pull-back, hence an involution on the space of cusp forms of weight $(2, 2, 2)$ for $\Gamma_0(p)^3$. By \eqref{rem:tripleroot}, we see that 
\begin{equation}\label{eq:Fup}
u_p^*(F)=-W(F)\cdot F.
\end{equation}

\begin{lemma}\label{lem:up}
We have
$
(u_p)_*(\Delta_{\gks}(e))=\Delta_{\gks}(w_p(e)),
$
for any point $e$ of $X$.
\end{lemma}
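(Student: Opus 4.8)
The plan is to unwind the definition of $\Delta_{\gks}(e) = P_{\gks}(e)_*(\Delta)$ and show that $u_p$ intertwines the diagonal and the Gross--Kudla--Schoen projector in the appropriate way. First I would record the obvious fact that $u_p = w_p \times w_p \times w_p$ sends the diagonal $\Delta = \{(x,x,x)\}$ to itself, since $(w_p(x), w_p(x), w_p(x))$ traces out $\Delta$ again as $x$ ranges over $X$; more precisely, if $\delta : X \to X^3$ is the diagonal embedding, then $u_p \circ \delta = \delta \circ w_p$ as morphisms, hence $(u_p)_*(\delta_* [X]) = \delta_*((w_p)_*[X]) = \delta_*[X] = \Delta$ on the nose (using that $w_p$ is an automorphism of $X$). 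The real content is therefore to understand how $u_p$ interacts with the auxiliary projectors $P_T(e)$ and hence with $P_{\gks}(e)$.

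Next I would verify the key compatibility $u_p \circ P_T(e) = P_T(w_p(e)) \circ u_p$ as correspondences on $X^3$, for each non-empty $T \subseteq \{1,2,3\}$. Recall $P_T(e)$ is the graph of $q_T(e) \circ p_T : X^3 \to X^3$, where $p_T$ projects onto the coordinates in $T$ and $q_T(e)$ fills the missing coordinates with $e$. The point is the elementary identity of morphisms $u_p \circ q_T(e) \circ p_T = q_T(w_p(e)) \circ p_T \circ u_p$: the left side applies $w_p$ in every slot after projecting and inserting $e$, while on the right side the coordinates in $T$ receive $w_p$ (commuting past $p_T$) and the coordinates in $T'$ receive $w_p(e)$ directly; both give the same map $X^3 \to X^3$. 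Translating this identity of morphisms into an identity of graphs (composition of correspondences corresponds to composition of the underlying maps, and pushforward along an automorphism of the target/source acts as claimed) yields $(u_p)_* \circ (P_T(e))_* = (P_T(w_p(e)))_* \circ (u_p)_*$ on Chow groups. Summing over $T$ with signs $(-1)^{|T'|}$ gives $(u_p)_* \circ P_{\gks}(e)_* = P_{\gks}(w_p(e))_* \circ (u_p)_*$.

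Combining the two steps: $(u_p)_*(\Delta_{\gks}(e)) = (u_p)_* P_{\gks}(e)_*(\Delta) = P_{\gks}(w_p(e))_* (u_p)_*(\Delta) = P_{\gks}(w_p(e))_*(\Delta) = \Delta_{\gks}(w_p(e))$, which is the claim. The main obstacle — really the only place requiring care — is the bookkeeping in the graph/correspondence formalism: making precise that composing the degree-$0$ self-correspondence $u_p$ (graph of an automorphism) with $P_T(e)$ corresponds to composing the underlying morphisms, and that the resulting identity of cycles on $X^3 \times X^3$ is exactly the graph of $q_T(w_p(e))\circ p_T$. This is standard (graphs of morphisms compose contravariantly to the morphisms, and an automorphism's graph is invertible in the correspondence ring), so I would state the morphism-level identities carefully and then invoke the functoriality of pushforward along graphs rather than recompute intersection products. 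One should also note $w_p(e) \in X(\Q)$ when $e \in X(\Q)$, since $w_p$ is defined over $\Q$ (already observed in Section \ref{s:hecke}), so that $\Delta_{\gks}(w_p(e))$ makes sense as an element of $\CH^2(X^3)_0(\Q)$.
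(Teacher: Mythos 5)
Your proof is correct and takes essentially the same approach as the paper's: the paper also observes that $(u_p)_*$ is just the set-theoretic image under the automorphism $u_p$, that $u_p(\Delta)=\Delta$, and that $u_p(P_T(e)_*(\Delta))=P_T(w_p(e))_*(\Delta)$ term by term. You have merely made the correspondence-level bookkeeping (the intertwining $u_p\circ P_T(e)=P_T(w_p(e))\circ u_p$) more explicit than the paper's terse one-line argument.
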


\begin{proof}
Note that the induced map $(u_p)_* : \CH^2(X^3) \lra \CH^2(X^3)$ on Chow groups simply maps a cycle to its image under $u_p$.
We have $u_p(\Delta)=\Delta$, since $u_p$ is an automorphism of $X^3$. However, for any proper subset $T\subset \{ 1,2,3\}$, we have $u_p(P_T(e)_*(\Delta))=P_T(w_p(e))_*(\Delta)$. 
\end{proof}

\begin{proposition}\label{prop:AJup}
Let $f_1, f_2, f_3$ be three normalised eigenforms in $S_2(\Gamma_0(p))$, let $F=f_1\otimes f_2\otimes f_3$ be their triple product, and suppose that $F$ satisfies Assumption \ref{sign}. For any point $e$ of $X$, we have 
$$
\AJ_{X^3}((t_F)_*(\Delta_{\gks}(e)))=-\AJ_{X^3}((t_F)_*(\Delta_{\gks}(w_p(e)))).
$$
\end{proposition}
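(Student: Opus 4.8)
The plan is to combine the compatibility of the Abel--Jacobi map with the action of correspondences (functoriality) with the two pieces of information already assembled: the eigenvalue relation \eqref{eq:Fup}, which says $u_p^*(F)=-W(F)\cdot F$, and Lemma \ref{lem:up}, which says $(u_p)_*(\Delta_{\gks}(e))=\Delta_{\gks}(w_p(e))$. Under Assumption \ref{sign} we have $W(F)=+1$, so \eqref{eq:Fup} reads $u_p^*(F)=-F$; dually, $u_p$ acts as $-1$ on the relevant Hecke--isotypic summand. The first step is to make precise the sense in which $t_F$ and $u_p$ interact: since $u_p=w_p\otimes w_p\otimes w_p$ and $t_F=t_{f_1}\otimes t_{f_2}\otimes t_{f_3}$ are built componentwise from elements of the commutative Hecke algebra $\T$ (using that $w_p\in\T$ by \eqref{hecke}--\eqref{endo}), the correspondences $t_F$ and $u_p$ commute in $\corr^0(X^3,X^3)_{K_F}$, and $u_p\circ t_F$ acts on $H^1(X(\C),\C)^{\otimes 3}$, hence on the target of $\AJ_{X^3}$ after applying $(t_F)_*$, as multiplication by the scalar $\prod_j(-a_p(f_j))=-W(F)=-1$. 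So $(u_p)_*\circ(t_F)_* = (t_F)_*\circ(u_p)_* = -(t_F)_*$ as maps landing in (and acting on) the $F$-isotypic piece of $J^2(X^3/\C)$.

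Concretely, I would argue as follows. By functoriality of the complex Abel--Jacobi map with respect to correspondences (the integration formula defining $\AJ_{X^3}$ intertwines push-forward of cycles with the induced map on intermediate Jacobians), we have
\[
\AJ_{X^3}\bigl((u_p)_*(t_F)_*(\Delta_{\gks}(e))\bigr) = (u_p)_*\,\AJ_{X^3}\bigl((t_F)_*(\Delta_{\gks}(e))\bigr).
\]
On the left, Lemma \ref{lem:up} together with the fact that $t_F$ and $u_p$ commute gives $(u_p)_*(t_F)_*(\Delta_{\gks}(e)) = (t_F)_*(u_p)_*(\Delta_{\gks}(e)) = (t_F)_*(\Delta_{\gks}(w_p(e)))$. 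On the right, $\AJ_{X^3}((t_F)_*(\Delta_{\gks}(e)))$ lies in the $F$-isotypic component of $J^2(X^3/\C)$ — since $(t_F)_*$ is idempotent and commutes with $u_p$ — on which $(u_p)_*$ acts by the scalar $-W(F) = -1$ by \eqref{eq:Fup} and Assumption \ref{sign}. Combining the two computations yields
\[
\AJ_{X^3}\bigl((t_F)_*(\Delta_{\gks}(w_p(e)))\bigr) = -\,\AJ_{X^3}\bigl((t_F)_*(\Delta_{\gks}(e))\bigr),
\]
which is the claimed identity.

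The only genuinely delicate point is the assertion that $(u_p)_*$ acts as the scalar $-W(F)$ on the image of $\AJ_{X^3}\circ(t_F)_*$. This requires being careful about the interplay between the covariant action $(u_p)_*$ on cycles/intermediate Jacobians and the contravariant action $u_p^*$ on cohomology used in \eqref{eq:Fup}: since $u_p$ is an automorphism of $X^3$ its graph is the transpose of the graph of $u_p^{-1}=u_p$, so $(u_p)_* = (u_p)^*$ as operators, and the eigenvalue computation \eqref{eq:Fup} on cusp forms transfers to the de Rham (hence the Betti, hence the intermediate Jacobian) realisation via \eqref{cuspH} and the fact that the Hodge structure $M(F)_{\dR}$ is generated over the $w_p$-action by the forms $f_j$ and their conjugates. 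I expect this bookkeeping — matching $w_p$-eigenvalues on $S_2(\Gamma_0(p))$ with the action of the correspondence $u_p$ on $H^3$ of the triple product and on $J^2$ — to be the main obstacle, though it is essentially formal once one tracks the variances carefully; everything else is a direct application of Lemma \ref{lem:up}, \eqref{eq:Fup}, and functoriality.
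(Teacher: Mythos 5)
Your proposal is correct and follows essentially the same route as the paper's proof: functoriality of the complex Abel--Jacobi map with respect to the correspondence $u_p$, commutativity of $t_F$ with $u_p$ (coming from $w_p\in\T$ via \eqref{hecke} and \eqref{endo}), Lemma \ref{lem:up} to rewrite $(u_p)_*(t_F)_*(\Delta_{\gks}(e))$ as $(t_F)_*(\Delta_{\gks}(w_p(e)))$, and the eigenvalue computation \eqref{eq:Fup} together with Assumption \ref{sign} to produce the factor $-1$. The only presentational difference is that the paper carries out the variance bookkeeping explicitly by writing the induced map on $J^2(X^3/\C)$ as $(u_p^*)^\vee$ and evaluating against a test form $\alpha\in\Fil^2 H^3_{\dR}(X^3/\C)$, whereas you handle the same point more informally (using that $u_p$ is an involution, so push-forward and pull-back agree); both are fine.
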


\begin{proof}
By functoriality of Abel--Jacobi maps with respect to correspondences, we have 
\begin{equation}\label{AJ1}
\AJ_{X^3}((u_p)_*(t_F)_*(\Delta_{\gks}(e)))=(u_p^*)^\vee \AJ_{X^3}((t_F)_*(\Delta_{\gks}(e))).
\end{equation}
By \eqref{hecke} and \eqref{endo}, $w_p$ commutes with $t_{f_j}$, for $j\in \{1, 2, 3\}$, as correspondences, implying that
\begin{equation*}
t_F\circ u_p = (t_{f_1}\circ w_p) \otimes (t_{f_2}\circ w_p) \otimes (t_{f_3}\circ w_p) 
= (w_p \circ t_{f_1}) \otimes (w_p \circ t_{f_2}) \otimes (w_p \circ t_{f_3}) 
= u_p \circ t_F,
\end{equation*}
as elements in $\corr^0(X^3, X^3)$. In particular, 
using Lemma \ref{lem:up}, we obtain
\[
(u_p)_*(t_F)_*(\Delta_{\gks}(e))=(t_F)_*(u_p)_*(\Delta_{\gks}(e))=(t_F)_*(\Delta_{\gks}(w_p(e))).
\]
The left hand side of \eqref{AJ1} is thus equal to $\AJ_{X^3}((t_F)_*(\Delta_{\gks}(w_p(e))))$.

On the other hand, $\AJ_{X^3}((t_F)_*(\Delta_{\gks}(e)))$ lies in $(t_F^*)^\vee(J^2(X^3/\C))$ by functoriality of the complex Abel--Jacobi map with respect to correspondences, that is, in the $F$-isotypic Hecke component of the intermediate Jacobian. The triple product Hecke algebra $\T^{\otimes 3}$ acts via correspondences on the latter by multiplication by the Hecke eigenvalues of $F$. 
More precisely, for any $\alpha\in \Fil^{2} H^{3}_{\dR}(X^3/\C)$, we have the equality
\[
(u_p^*)^\vee \AJ_{X^3}((t_F)_*(\Delta_{\gks}(e)))(\alpha)=\AJ_{X^3}(\Delta_{\gks}(e))(u_p^*(t_F^*(\alpha))).
\]
The operator $u_p\in \T^{\otimes 3}$ acts via pull-back on the $F$-isotypic component $(t_F)^*H_{\dR}^3(X^3/\C)$ as multiplication by $-W(F)$ by \eqref{eq:Fup}. In particular, we have $u_p^*(t_F^*(\alpha))=-W(F)t_F^*(\alpha)$.
By Assumption \ref{sign}, the right hand side of \eqref{AJ1} is therefore given by
\[
(u_p^*)^\vee \AJ_{X_0(p)^3}((t_F)_*(\Delta_{\gks}(e)))=- \AJ_{X_0(p)^3}((t_F)_*(\Delta_{\gks}(e))).
\]
The result follows.
\end{proof}

Mazur \cite[Thm. 1]{mazur78} proved, for $g_X\geq 2$ and $p\not\in \{ 37, 43, 67, 163\}$, that $X(\Q)=\{ \xi_\infty, \xi_0 \},$ where we recall that $\xi_\infty$ and $\xi_0$ denote the two cusps of $X$. Moreover, the modular curve $X_0(37)$ has two non-cuspidal $\Q$-rational points, while $X_0(p)$ has a unique non-cuspidal $\Q$-rational point, for $p\in\{ 43, 67, 163\}$. 

\begin{corollary}
Let $f_1, f_2, f_3$ be three normalised eigenforms in $S_2(\Gamma_0(p))$, let $F=f_1\otimes f_2\otimes f_3$ be their triple product, and suppose that $F$ satisfies Assumption \ref{sign}.
If $p$ belongs to $\{ 43, 67, 163 \}$, and $e$ denotes the unique non-cuspidal $\Q$-rational point of $X$, then $2 \AJ_{X^3}((t_F)_*(\Delta_{\gks}(e)))=0.$
\end{corollary}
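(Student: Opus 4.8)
The plan is to deduce this directly from Proposition \ref{prop:AJup} together with Mazur's description of $X(\Q)$ recalled just above; the only real input needed is that the unique non-cuspidal rational point is fixed by $w_p$.

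First I would note that, since the Atkin--Lehner involution $w_p$ is defined over $\Q$, it restricts to a bijection of the finite set $X(\Q)$. For $p\in\{43,67,163\}$ (all of which have $g_X\geq 2$), Mazur's theorem says this set consists of exactly three points: the two cusps $\xi_\infty$, $\xi_0$, and the unique non-cuspidal rational point $e$. On the uniformisation \eqref{uniformisation} the involution $w_p$ is given by $\tau\mapsto -1/(p\tau)$, which sends $i\infty$ to $0$ and $0$ to $i\infty$; hence $w_p$ interchanges the two cusps $\xi_\infty$ and $\xi_0$. Being a permutation of the three-element set $X(\Q)$ that swaps $\xi_\infty$ with $\xi_0$, it must fix the remaining element, so $w_p(e)=e$.

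Next I would apply Proposition \ref{prop:AJup} to the point $e$ (the hypothesis $W(F)=+1$ is exactly Assumption \ref{sign}, which is assumed in the statement), and then substitute $w_p(e)=e$, obtaining
\[
\AJ_{X^3}((t_F)_*(\Delta_{\gks}(e)))=-\AJ_{X^3}((t_F)_*(\Delta_{\gks}(w_p(e))))=-\AJ_{X^3}((t_F)_*(\Delta_{\gks}(e)))
\]
in $J^2(X^3/\C)$. Transposing the term on the right gives $2\AJ_{X^3}((t_F)_*(\Delta_{\gks}(e)))=0$, which is the assertion.

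I do not expect any genuine obstacle: the statement is a formal consequence of Proposition \ref{prop:AJup}. The one step that deserves care is the counting argument forcing $w_p(e)=e$, which uses both the $\Q$-rationality of $w_p$ and the exact cardinality of $X(\Q)$ from Mazur. It is worth stressing that this argument breaks down for $p=37$: there $X_0(37)(\Q)$ contains two non-cuspidal rational points, and $w_{37}$ may interchange them rather than fix either one, so one does not automatically get a $2$-torsion conclusion — that case is handled separately using explicit numerical data (cf. Corollary \ref{coromain}).
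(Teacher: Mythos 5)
Your proof is correct and follows essentially the same route as the paper: observe that $w_p$ permutes $X(\Q)$ and interchanges the two cusps, conclude that it must fix the unique non-cuspidal rational point $e$, and then apply Proposition \ref{prop:AJup} with $w_p(e)=e$. Your additional remark distinguishing the $p=37$ case is accurate and consistent with how the paper handles it.
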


\begin{proof}
The involution $w_p$ maps $\Q$-rational points to $\Q$-rational points and permutes the two cusps $\xi_\infty$ to $\xi_0$. It therefore fixes the non-cuspidal point $e$, and the result follows from Proposition \ref{prop:AJup}. \end{proof}

\begin{corollary}\label{corocusp}
Let $f_1, f_2, f_3$ be three normalised eigenforms in $S_2(\Gamma_0(p))$, let $F=f_1\otimes f_2\otimes f_3$ be their triple product, and suppose that $F$ satisfies Assumption \ref{sign}.
If $g_X\geq 2$ and $n$ denotes the numerator of $(p-1)/12$, then
\[
2n \AJ_{X^3}((t_F)_*(\Delta_{\gks}(\xi_\infty)))=2n \AJ_{X^3}((t_F)_*(\Delta_{\gks}(\xi_0)))=0. 
\] 
\end{corollary}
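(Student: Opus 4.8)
The plan is to combine Proposition \ref{prop:AJup} with the Manin--Drinfeld theorem. Since $w_p$ interchanges the two cusps $\xi_\infty$ and $\xi_0$ (it is given by $\tau \mapsto -1/(p\tau)$, which sends $i\infty$ to $0$), Proposition \ref{prop:AJup} applied with $e = \xi_\infty$ gives
\[
\AJ_{X^3}((t_F)_*(\Delta_{\gks}(\xi_\infty))) = -\AJ_{X^3}((t_F)_*(\Delta_{\gks}(\xi_0))),
\]
so the two classes in the statement differ only by sign, and it suffices to bound the order of one of them. First I would show that $\Delta_{\gks}(\xi_\infty) - \Delta_{\gks}(\xi_0)$ is a cycle supported on a product of cusps, hence (after projecting under $t_F$) maps to a torsion element: indeed, from the definition \eqref{def:gkscycle} the difference $\Delta_{\gks}(e) - \Delta_{\gks}(e')$ involves only the correction terms $P_T(e)_*(\Delta) - P_T(e')_*(\Delta)$ for proper subsets $T$, each of which is a cycle lying in a subvariety of $X^3$ isomorphic to $X$ or $X^2$ in which one or two coordinates are constrained to $e$ or $e'$; such cycles are linearly equivalent, after pushforward to $J$-components via the modular parametrisation, to multiples of the class of $\xi_\infty - \xi_0$ in $\Pic^0(X) = J(\Q)$.

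The key input is then the Manin--Drinfeld theorem \cite{manin}: the class of $\xi_\infty - \xi_0$ in $J(\Q)$ is torsion, and more precisely, by the work of Ogg and the computation of the cuspidal divisor class group for $X_0(p)$, its order is exactly the numerator $n$ of $(p-1)/12$. Pushing the relevant one-dimensional correction cycles through the maps induced by $t_F$ and tracking how the class of $n(\xi_\infty - \xi_0) = 0$ in $J$ forces the corresponding component of $\AJ_{X^3}((t_F)_*(\Delta_{\gks}(\xi_\infty) - \Delta_{\gks}(\xi_0)))$ to vanish, one obtains that $n$ times this difference is zero. Combining with the relation $\AJ_{X^3}((t_F)_*(\Delta_{\gks}(\xi_\infty))) = -\AJ_{X^3}((t_F)_*(\Delta_{\gks}(\xi_0)))$ from Proposition \ref{prop:AJup}, we get $2n\,\AJ_{X^3}((t_F)_*(\Delta_{\gks}(\xi_\infty))) = n\bigl(\AJ_{X^3}((t_F)_*(\Delta_{\gks}(\xi_\infty))) + \AJ_{X^3}((t_F)_*(\Delta_{\gks}(\xi_\infty)))\bigr)$, and the right-hand side is $n$ times the negative of $\AJ_{X^3}((t_F)_*(\Delta_{\gks}(\xi_\infty) - \Delta_{\gks}(\xi_0)))$ up to a $t_F$-stable combination, which is killed by $n$; hence the claimed identity $2n\,\AJ_{X^3}((t_F)_*(\Delta_{\gks}(\xi_\infty))) = 0$, and symmetrically for $\xi_0$.

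The main obstacle I expect is making precise the reduction of the correction cycles $P_T(\xi_\infty)_*(\Delta) - P_T(\xi_0)_*(\Delta)$ to a statement purely about the divisor class of $\xi_\infty - \xi_0$ in $J$, and propagating this through the Abel--Jacobi map compatibly with the Künneth projector $t_F$: one must check that the relevant pushforwards of these lower-dimensional cycles land in the images of Abel--Jacobi maps of $X$ or $X^2$ (where torsion of the source divisor class implies torsion of the image), and that the factor-of-$2$ bookkeeping coming from the sign relation in Proposition \ref{prop:AJup} is correct. The functoriality of Abel--Jacobi maps with respect to the projections $\pr_i$ and the inclusions $q_T(e)$, together with the fact that $t_F$ kills everything outside the Künneth component $H^1(X)^{\otimes 3}$, should make this routine but somewhat delicate to write out carefully.
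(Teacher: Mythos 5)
Your outline matches the paper's proof in its skeleton: combine Proposition \ref{prop:AJup} (the $w_p$-sign relation between the two cusps) with the fact that the class of $(\xi_\infty)-(\xi_0)$ in $J(\Q)$ has order $n$, and multiply. But there is one genuine gap, and it is exactly the place you yourself flag as ``the main obstacle.''

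You need to show that $\Delta_{\gks}(\xi_\infty)-\Delta_{\gks}(\xi_0)$ is annihilated by $n$ in $\CH^2(X^3)_0(\Q)$ (or at least that its Abel--Jacobi image is). Your strategy is to reduce this to a statement about the correction terms $P_T(e)_*(\Delta)$ for proper $T$, observe that these are supported on one-dimensional subvarieties such as $X\times\{e\}\times\{e\}$ or $\Delta_{12}\times\{e\}$, and then somehow push the rational-equivalence relation $n(\xi_\infty)-n(\xi_0)\sim 0$ through. This is the right intuition, but the reduction is not completed: you would still have to check that the map $e\mapsto \Delta_{\gks}(e)$ factors through rational equivalence of zero-cycles on $X$, and that it does so compatibly with the Abel--Jacobi map and the projector $t_F$. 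The paper dispenses with this entirely by citing Gross--Schoen \cite[Prop.~3.6]{grsc}: they construct a correspondence $\Delta_{\gks}\in\corr^1(X,X^3)$ whose pushforward $\Delta_{\gks,*}:\Pic(X)\to\CH^2(X^3)$ sends the class of a divisor $\sum m(e)e$ to $\sum m(e)\Delta_{\gks}(e)$. From this it is immediate that $\Delta_{\gks}(\xi_\infty)-\Delta_{\gks}(\xi_0)$ depends only on the class of $(\xi_\infty)-(\xi_0)$ in $J(\Q)$, and then \cite[Thm.~1]{mazur77} (the order-$n$ result you correctly attribute to Ogg's circle of ideas) gives $n\bigl(\Delta_{\gks}(\xi_\infty)-\Delta_{\gks}(\xi_0)\bigr)=0$ already in the Chow group, before applying $t_F$ or $\AJ_{X^3}$. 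Everything after that is linear.

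One smaller remark: your final bookkeeping paragraph is slightly off. Writing $a=\AJ_{X^3}((t_F)_*\Delta_{\gks}(\xi_\infty))$ and $b=\AJ_{X^3}((t_F)_*\Delta_{\gks}(\xi_0))$, Proposition \ref{prop:AJup} gives $a=-b$, hence $2a=a-b$, and the previous step gives $n(a-b)=0$; so $2na=0$ with no ``up to a $t_F$-stable combination'' needed and no negative sign as you wrote. Once the Gross--Schoen correspondence is in hand, the arithmetic is a two-line substitution.
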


\begin{proof}
Gross and Schoen \cite[Prop. 3.6]{grsc} have constructed a correspondence $\Delta_{\gks}$ in $\corr^{1}(X, X^3)$ with the property that the induced push-forward map 
\begin{equation}\label{grsccor}
\Delta_{\gks, *} : \CH^1(X)=\Pic(X) \lra \CH^2(X^3) 
\end{equation}
sends the rational equivalence class of a divisor $\sum m(e)e$ to $\sum m(e)\Delta_{\gks}(e)$. In particular,
the cycle $\Delta_{\gks}(\xi_\infty)-\Delta_{\gks}(\xi_0)$ in $\CH^2(X^3)_0(\Q)$ depends only on the class of the divisor $(\xi_\infty)-(\xi_0)$ in $\CH^1(X)_0(\Q)=J(\Q)$. By \cite[Thm. 1]{mazur77}, the degree zero divisor $(\xi_\infty)-(\xi_0)$ is torsion of order $n$ in the Jacobian $J$. It follows that
$$
n(\Delta_{\gks}(\xi_\infty)-\Delta_{\gks}(\xi_0))=\Delta_{\gks, *}(n((\xi_\infty)-(\xi_0)))=0 
$$
in $\CH^2(X^3)_0(\Q),$
and in particular
\[
n(\AJ_{X^3}((t_F)_*(\Delta_{\gks}(\xi_\infty)))-\AJ_{X^3}((t_F)_*(\Delta_{\gks}(\xi_0))))=0
\]
in $J^{2}(X^3/\C)$.
The involution $w_p$ permutes the cusps $\xi_\infty$ and $\xi_0$. By Proposition \ref{prop:AJup}, we thus have
$
\AJ_{X^3}((t_F)_*(\Delta_{\gks}(\xi_\infty)))=-\AJ_{X^3}((t_F)_*(\Delta_{\gks}(\xi_0))),
$
which concludes the proof.
\end{proof}

\subsection{The case $p=37$}

To complete the proof of Theorem \ref{GKStorsion}, the only remaining case is the one where $p=37$ and the chosen base point is a non-cuspidal $\Q$-rational point. The curve $X_0(37)$ has been extensively studied by Mazur and Swinnerton-Dyer \cite[\S 5]{mazurSD}. It has genus $2$ and is therefore hyperelliptic. Its hyperelliptic involution will be denoted by $S$. In particular, for all points $e$ of $X_0(37)$, we have $6\Delta_{\gks}(e)=0$ in the Griffiths group $\Gr^2(X_0(37)^3)$ of null-homologous algebraic cycles modulo algebraic equivalence \cite[Cor. 4.9]{grsc}. The involution $S$ is distinct from the Atkin--Lehner involution $w_{37}$, as the quotient $X_0(37)/w_{37}$ has genus $1$. Since $S$ commutes with every automorphism of $X_0(37)$ \cite[p. 27]{mazurSD}, it commutes in particular with $w_{37}$, and we can define another involution $T=S\circ w_{37}=w_{37}\circ S$. Let $\gamma_0=T(\xi_0)$ and $\gamma_\infty=T(\xi_\infty)$ be the images of the two cusps by $T$. By \cite[Prop. 2]{mazurSD}, we have 
\begin{equation}\label{xo37}
X_0(37)(\Q)=\{ \xi_0, \xi_\infty, \gamma_0, \gamma_\infty \} \qquad \text{ and } \qquad w_{37}(\gamma_0)=\gamma_\infty.
\end{equation}

The involution $S$ has $6$ fixed points, none of which are rational over $\Q$. By \cite[Prop. 4.8]{grsc}, $6\Delta_{\gks}(e)=0$ in $\CH^2(X_0(37)^3)_0$ if $e$ is a fixed point of $S$. 
By \cite[p. 29]{mazurSD}, the two fixed points $\alpha_1$ and $\alpha_2$ of $w_{37}$ are Galois conjugates defined over $\Q(\sqrt{37})$. We have the following result.
\begin{corollary}
Let $f_1, f_2, f_3$ be three normalised eigenforms in $S_2(\Gamma_0(37))$, let $F=f_1\otimes f_2\otimes f_3$ be their triple product, and suppose that $F$ satisfies Assumption \ref{sign}.
The images under the complex Abel--Jacobi map $\AJ_{X_0(37)^3}$ of the cycles $(t_F)_*\Delta_{\gks}(\alpha_1)$ and $(t_F)_*\Delta_{\gks}(\alpha_2)$ are $2$-torsion in the intermediate Jacobian $J^2(X_0(37)^3/\C)$.
\end{corollary}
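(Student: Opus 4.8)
The plan is to deduce this immediately from Proposition \ref{prop:AJup}. The essential observation is that Proposition \ref{prop:AJup}, and Lemma \ref{lem:up} on which it rests, are stated and proved for an \emph{arbitrary} point $e$ of $X$; at no stage is it used that $e$ is $\Q$-rational. In particular they apply to the two points $\alpha_1,\alpha_2$, which are defined over $\Q(\sqrt{37})$ and hence, via the fixed embedding $\bar\Q\hookrightarrow\C$, give rise to cycles $\Delta_{\gks}(\alpha_i)$ over $\C$ to which the complex Abel--Jacobi map $\AJ_{X_0(37)^3}$ may be applied.

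First I would recall that $\alpha_1$ and $\alpha_2$ are, by definition, the two fixed points of the Atkin--Lehner involution $w_{37}$, so that $w_{37}(\alpha_i)=\alpha_i$ for $i\in\{1,2\}$. Applying Proposition \ref{prop:AJup} with $e=\alpha_i$ and using $w_{37}(\alpha_i)=\alpha_i$ then gives
\[
\AJ_{X_0(37)^3}\big((t_F)_*\Delta_{\gks}(\alpha_i)\big)=-\AJ_{X_0(37)^3}\big((t_F)_*\Delta_{\gks}(\alpha_i)\big),
\]
whence $2\,\AJ_{X_0(37)^3}\big((t_F)_*\Delta_{\gks}(\alpha_i)\big)=0$ in $J^2(X_0(37)^3/\C)$, which is the assertion.

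There is no genuine obstacle here. The only point deserving a moment's attention is to confirm that the ingredients entering the proof of Proposition \ref{prop:AJup} — namely Lemma \ref{lem:up}, the commutation of $t_F$ with $u_p$ furnished by \eqref{hecke}--\eqref{endo}, functoriality of the complex Abel--Jacobi map with respect to correspondences, and the computation via \eqref{eq:Fup} of the action of $u_p$ on the $F$-isotypic part of $H^3_{\dR}(X_0(37)^3/\C)$ — go through verbatim when $e$ is allowed to be defined over a finite extension of $\Q$ inside $\C$ rather than over $\Q$ itself; an inspection of the argument confirms this. It is instructive to contrast this with the two remaining non-cuspidal rational points $\gamma_0,\gamma_\infty$, which $w_{37}$ interchanges rather than fixes: there Proposition \ref{prop:AJup} only relates the two Abel--Jacobi images to one another, and one must invoke in addition the hyperelliptic involution $S$ of \cite{mazurSD} together with Mazur's torsion results to conclude, whereas here the $w_{37}$-stability of $\alpha_1$ and $\alpha_2$ makes the conclusion immediate.
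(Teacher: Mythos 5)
Your proof is correct and takes essentially the same approach as the paper: the paper's proof is a one-line invocation of Proposition~\ref{prop:AJup} using the fact that $\alpha_1,\alpha_2$ are fixed by $w_{37}$, which is exactly your argument. Your extra observation that Proposition~\ref{prop:AJup} is stated for an arbitrary point $e$ of $X$ (not necessarily $\Q$-rational) is accurate and justified, though the paper treats it as implicit.
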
 

\begin{proof}
This is an immediate consequence of Proposition \ref{prop:AJup}, given that $\alpha_1$ and $\alpha_2$ are the fixed points of $w_{37}$.
\end{proof}

We complete the proof of Theorem \ref{GKStorsion}. 
\begin{corollary}
Let $f_1, f_2, f_3$ be three normalised eigenforms in $S_2(\Gamma_0(37))$, let $F=f_1\otimes f_2\otimes f_3$ be their triple product, and suppose that $F$ satisfies Assumption \ref{sign}. Then $$6 \AJ_{X_0(37)^3}((t_F)_*(\Delta_{\gks}(\gamma_0)))=6\AJ_{X_0(37)^3}((t_F)_*(\Delta_{\gks}(\gamma_\infty)))=0.$$
\end{corollary}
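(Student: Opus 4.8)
The plan is to combine the two facts already established about the cusps with the torsion relation coming from the extra hyperelliptic-type involution $T$, exactly as in the proof of Corollary \ref{corocusp}. First I would recall that by \eqref{xo37} we have $\gamma_0=T(\xi_0)$ and $\gamma_\infty=T(\xi_\infty)$, where $T=S\circ w_{37}=w_{37}\circ S$. The involution $T$, being an automorphism of $X_0(37)$, lies in $\End^0_\Q(J)$ via \eqref{hecke} and \eqref{endo}; crucially, since $S$ commutes with every automorphism of $X_0(37)$ and $w_{37}$ commutes with the Hecke operators, $T$ commutes with each $t_{f_j}$, hence the product $T\times T\times T$ commutes with $t_F$ as a correspondence on $X^3$.

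Next I would exploit the functoriality of $\Delta_{\gks}$ from Gross--Schoen \eqref{grsccor}. Applying the involution $T\times T\times T$ to $\Delta_{\gks}(\xi_0)$ and $\Delta_{\gks}(\xi_\infty)$, the same argument as in Lemma \ref{lem:up} shows $(T\times T\times T)_*(\Delta_{\gks}(e))=\Delta_{\gks}(T(e))$ for any point $e$ of $X_0(37)$, so that $(T\times T\times T)_*(\Delta_{\gks}(\xi_0))=\Delta_{\gks}(\gamma_0)$ and similarly for $\gamma_\infty$. Now the divisor $(\gamma_\infty)-(\gamma_0)=T_*((\xi_\infty)-(\xi_0))$ is the image under the algebra endomorphism of $J$ induced by $T$ of the torsion class $(\xi_\infty)-(\xi_0)$, which has order $n=3$ for $p=37$ by \cite[Thm. 1]{mazur77}; hence $(\gamma_\infty)-(\gamma_0)$ is again $3$-torsion in $J(\Q)$. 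By \eqref{grsccor} this gives $3(\Delta_{\gks}(\gamma_\infty)-\Delta_{\gks}(\gamma_0))=0$ in $\CH^2(X_0(37)^3)_0(\Q)$, and therefore, after applying $\AJ_{X_0(37)^3}\circ(t_F)_*$ and using that the two Abel--Jacobi classes are interchanged up to sign by Proposition \ref{prop:AJup} (since $w_{37}$ swaps $\gamma_0$ and $\gamma_\infty$ by \eqref{xo37}), we conclude
\[
3\bigl(\AJ_{X_0(37)^3}((t_F)_*(\Delta_{\gks}(\gamma_\infty)))-\AJ_{X_0(37)^3}((t_F)_*(\Delta_{\gks}(\gamma_0)))\bigr)=0
\]
and $\AJ_{X_0(37)^3}((t_F)_*(\Delta_{\gks}(\gamma_\infty)))=-\AJ_{X_0(37)^3}((t_F)_*(\Delta_{\gks}(\gamma_0)))$, whence $6\AJ_{X_0(37)^3}((t_F)_*(\Delta_{\gks}(\gamma_0)))=6\AJ_{X_0(37)^3}((t_F)_*(\Delta_{\gks}(\gamma_\infty)))=0$.

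The only point requiring a little care — and the one I would single out as the main obstacle — is the verification that $(T\times T\times T)_*(\Delta_{\gks}(e))=\Delta_{\gks}(T(e))$, i.e.\ that the Gross--Schoen construction is equivariant for automorphisms of the base curve. This follows from the same bookkeeping as Lemma \ref{lem:up}: $T\times T\times T$ fixes the small diagonal $\Delta$ setwise, and for each proper subset $S'\subset\{1,2,3\}$ it carries $P_{S'}(e)_*(\Delta)$ to $P_{S'}(T(e))_*(\Delta)$ because filling in coordinates with $e$ and then applying $T$ in each factor is the same as applying $T$ in the retained factors and filling in with $T(e)$. Alternatively one can invoke \eqref{grsccor} directly, noting that $(T\times T\times T)_*\circ\Delta_{\gks,*}=\Delta_{\gks,*}\circ T_*$ as maps $\Pic(X_0(37))\to\CH^2(X_0(37)^3)$, which is exactly the statement needed, and is immediate from the explicit formula for the correspondence $\Delta_{\gks}\in\corr^1(X,X^3)$. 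Everything else is a formal consequence of Proposition \ref{prop:AJup}, the Manin--Drinfeld-type torsion bound of \cite{mazur77}, and the commutation of $T$ with the Hecke projectors.
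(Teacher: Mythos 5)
Your proof is correct and follows essentially the same route as the paper: the sign relation from Proposition \ref{prop:AJup} via $w_{37}$ swapping $\gamma_0$ and $\gamma_\infty$, combined with the fact that $(\gamma_0)-(\gamma_\infty)=T_*((\xi_0)-(\xi_\infty))$ is $3$-torsion in $J_0(37)(\Q)$ by \cite[Thm.~1]{mazur77}, and the Gross--Schoen correspondence \eqref{grsccor} to transport this torsion to the Chow group. The intermediate discussion of $(T\times T\times T)_*$-equivariance of $\Delta_{\gks}$ and the commutation of $T$ with the Hecke projectors is correct but redundant; the argument runs cleanly once you know that $(\gamma_0)-(\gamma_\infty)$ is $3$-torsion in $J(\Q)$, since the push-forward under any endomorphism of $J$ preserves torsion.
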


\begin{proof}
By \eqref{xo37}, the Atkin--Lehner involution $w_{37}$ interchanges $\gamma_0$ and $\gamma_\infty$. By Proposition \ref{prop:AJup}, we have $\AJ_{X_0(37)^3}((t_F)_*(\Delta_{\gks}(\gamma_0)))=-\AJ_{X_0(37)^3}((t_F)_*(\Delta_{\gks}(\gamma_\infty)))$. The element 
\[
2\AJ_{X_0(37)^3}((t_F)_*(\Delta_{\gks}(\gamma_0)))=\AJ_{X_0(37)^3}((t_F)_*(\Delta_{\gks}(\gamma_0)-\Delta_{\gks}(\gamma_\infty)))
\]
in $J^2(X_0(37)^3/\C)$
depends only on the class of $(\gamma_0)-(\gamma_\infty)$ in $J_0(37)(\Q)$ by \eqref{grsccor}. But this class is the image of the class of $(\xi_0)-(\xi_\infty)$ by the involution of $J_0(37)$ obtained from $T$ by push-forward. The latter class has order equal to the numerator of $(37-1)/12=3$ \cite[Thm. 1]{mazur77}. 
\end{proof}

\section{Chow--Heegner points}\label{s:CH}

Let $f$ be a normalised eigenform in $S_2(\Gamma_0(p))$ with rational coefficients, and let $E_f$ be the elliptic curve associated with $f$ by the Eichler--Shimura construction \cite{shimura58}. In particular, there is a quotient map
$
\pi_f : J \lra E_f, 
$
induced by the idempotent correspondence $t_f$ in $\corr^0(X, X)_\Q$ of Section \ref{s:motivef}. In this special case of rational coefficients, note that we have the equality of motives $E_f=M(f)=M([f])=(X, t_f, 0)$.

\begin{remark}
To the best of the author's knowledge, it is unknown whether there are finitely or infinitely many elliptic curves over $\Q$ of prime conductor. It is a result of Setzer \cite[Thm. 2]{setzer} that, given a prime $p$ distinct from $2, 3$, and $17$, there is an elliptic curve of conductor $p$ over $\Q$ with a rational $2$-torsion point if and only if $p=u^2+64$ for some rational integer $u$. A conjecture of Hardy and Littlewood \cite[Conj. F]{hardylittlewood} implies that there are infinitely many values of $u$ such that $u^2+64$ is prime. Thus, conditional on this conjecture of Hardy and Littlewood, there are infinitely many primes $p$ which occur as the conductor of an elliptic curve over $\Q$. This is explained in detail in the preprint \cite{howejoshi}.
\end{remark}

Let $g$ be an auxiliary normalised eigenform in $S_2(\Gamma_0(p))$, distinct from $f$. Recall from Section \ref{s:motivef} the correspondence $t_{[g]}\in \corr^0(X, X)_\Q$, which cuts out the motive $M([g])=(X, t_{[g]}, 0)=A_{[g]}$. 
Consider the correspondence 
$$
\Pi_{[g]}:= \pr_{12}^*(t_{[g]}) \cdot \pr_{34}^*(\Delta)\in \CH^2(X^4)(\Q)\otimes \Q,
$$
where $\Delta\in \CH^1(X^2)(\Q)$ is the diagonal cycle. After clearing denominators, we may and will consider $\Pi_{[g]}$ as an element of $\corr^{-1}(X^3, X)$, which in turn 
induces a push-forward map
\[
\Pi_{[g], *} : \CH^2(X^3)_0(L) \lra \CH^1(X)_0(L)=J(L),
\]
for any field extension $L$ of $\Q$.
By composing correspondences, we can define 
\begin{equation}\label{pigf}
\Pi_{[g], f}:=t_f \circ \Pi_{[g]} = \pr_{12}^*(t_{[g]}) \cdot \pr_{34}^*(t_f) \in \corr^{-1}(X^3, E_f).
\end{equation}
This induces, in the terminology of \cite{bdp2}, a generalised modular parametrisation
\[
\Pi_{[g],f, *} = \pi_f \circ \Pi_{[g], *} : \CH^2(X^3)_0(L) \lra E_f(L). 
\]

\begin{remark}
Instead of defining the correspondence $\Pi_{[g]}$ as $\pr_{12}^*(t_{[g]})\cdot \pr_{34}^*(\Delta)$, one could imagine proposing to use $\pr_{12}^*(t_{[g]})\cdot \pr_{34}^*(t_{[g]})$. One checks however that 
$$\pr_{12}^*(t_{[g]})\cdot \pr_{34}^*(t_{[g]})= t_{[g]}\circ (\pr_{12}^*(t_{[g]})\cdot \pr_{34}^*(\Delta)),$$ 
hence $t_f\circ (\pr_{12}^*(t_{[g]})\cdot \pr_{34}^*(t_{[g]}))= (t_f\circ t_{[g]})\circ (\pr_{12}^*(t_{[g]})\cdot \pr_{34}^*(\Delta))$. Since $f$ and $g$ are not $\Gal(\bar{\Q}/\Q)$ conjugates, we have $\pi_f \circ \pi_{[g]}=0$ in $\End^0_\Q(J)$. In particular, the generalised modular parametrisation $(t_f \circ (\pr_{12}^*(t_{[g]})\cdot \pr_{34}^*(t_{[g]})))_* : \CH^2(X^3)_0\lra E_f$ is the zero map in this case. 
\end{remark}

Using the modular parametrisation $\Pi_{[g],f, *}$, the cycle $\Delta_{\gks}(\xi_\infty)\in \CH^2(X^3)_0(\Q)$, and the conjectural formula \eqref{conj:grku}, Darmon, Rotger, and Sols proved the following concerning the Chow--Heegner point 
\begin{equation}\label{CHP}
P_{g,f}(\xi_\infty):=\Pi_{[g],f,*}(\Delta_{\gks}(\xi_\infty))=\pi_f(\Pi_{[g],*}(\Delta_{\gks}(\xi_\infty)))\in E_f(\Q),
\end{equation}
building on the work of Yuan, Zhang, and Zhang \cite{yzz}.
\begin{theorem}\label{thm:DRS}
Assume that $W(f)=-1$ and $W(\sym^2 g\otimes f)=+1$. Then the Chow--Heegner point $P_{g, f}(\xi_\infty)$ has infinite order in $E_f(\Q)$ if and only if 
\[
\ord_{s=1} L(f, s)=1 \qquad \text{ and } \qquad \ord_{s=2} L(\sym^2 (g^\sigma)\otimes f, s)=0, \qquad \forall \:\sigma : K_g \hookrightarrow \C.
\]
\end{theorem}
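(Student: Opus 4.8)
The plan is to deduce the equivalence from the Gross--Kudla height formula \eqref{conj:grku}, proved by Yuan--Zhang--Zhang \cite{yzz}, applied to $F=g\otimes g\otimes f$ and to its $\Gal(\bar\Q/\Q)$-conjugates, after transporting everything from $X^3$ to $E_f$ through the correspondence $\Pi_{[g],f}$ of \eqref{pigf}. Since the eigenvalues $a_p(g^\sigma)\in\{\pm1\}$ are Galois-fixed, $W(F_\sigma)=a_p(g^\sigma)^2a_p(f)=W(f)=-1$ for $F_\sigma:=g^\sigma\otimes g^\sigma\otimes f$ and every embedding $\sigma\colon K_g\hookrightarrow\C$, by \eqref{rem:tripleroot}, so the derivative formula \eqref{conj:grku} is available for each $F_\sigma$. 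Using $\det V_\ell(g^\sigma)=\Q_\ell(-1)$ one has $V_\ell(g^\sigma)^{\otimes2}=\sym^2V_\ell(g^\sigma)\oplus\Q_\ell(-1)$, whence $L(F_\sigma,s)=L(\sym^2(g^\sigma)\otimes f,s)\cdot L(f,s-1)$; since $W(f)=-1$ forces $L(f,1)=0$, the product rule gives $L'(F_\sigma,2)=L(\sym^2(g^\sigma)\otimes f,2)\cdot L'(f,1)$. As $W(\sym^2(g^\sigma)\otimes f)=W(\sym^2 g\otimes f)=+1$ is insensitive to $\sigma$, each factor $L(\sym^2(g^\sigma)\otimes f,s)$ vanishes to even order at $s=2$ while $\ord_{s=1}L(f,s)$ is odd; the two conditions in the statement thus amount exactly to $L'(f,1)\ne0$ together with $L(\sym^2(g^\sigma)\otimes f,2)\ne0$ for all $\sigma$.

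The crux is the analysis of $\Pi_{[g],f}$ on $\AJ_{X^3}(\Delta_{\gks}(\xi_\infty))$. One computes that $\Pi_{[g]}$ acts on the K\"unneth summand $H^1(X)^{\otimes3}$ of $H^3(X^3)$ by $\alpha_1\otimes\alpha_2\otimes\alpha_3\mapsto\langle\alpha_2,\pi_{[g]}\alpha_1\rangle\,\alpha_3$, where $\langle\,,\rangle$ is the cup-product pairing on $H^1(X)$; this pairing is alternating and $\pi_{[g]}$ is self-adjoint for it, so the resulting bilinear form in the first two variables is \emph{anti}symmetric, and it pairs the Hecke-isotypic pieces $V_\ell(g^\sigma)$, $V_\ell(g^\tau)$ nontrivially only when $\sigma=\tau$. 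On the other hand, the $S_3$-invariance of the cycle $\Delta_{\gks}(\xi_\infty)$, together with the Koszul signs in the $S_3$-action on $H^1(X)^{\otimes3}$, forces the $F_\sigma$-isotypic part of $\AJ_{X^3}(\Delta_{\gks}(\xi_\infty))$ to land in the \emph{anti}symmetric piece $\wedge^2V_\ell(g^\sigma)\otimes V_\ell(f)=V_\ell(f)(-1)$; and there $\Pi_{[g],f}$ is the Weil-pairing isomorphism $\wedge^2V_\ell(g^\sigma)\xrightarrow{\sim}\Q_\ell(-1)$ tensored with the identity of $V_\ell(f)$, hence injective up to torsion. (In particular the would-be obstruction $\Hom(\sym^2 M(g^\sigma),\LL)=0$ is irrelevant, precisely because the relevant class never lay in the symmetric part.) Combining this with functoriality of Abel--Jacobi maps and of the height pairing with respect to correspondences, with the orthogonality of distinct Hecke components, and with the first paragraph, one reaches an identity of the shape
\[
\hat h_{E_f}\big(P_{g,f}(\xi_\infty)\big)=L'(f,1)\cdot\sum_{\sigma\colon K_g\hookrightarrow\C}c_\sigma\,\frac{L(\sym^2(g^\sigma)\otimes f,2)}{\Omega_{F_\sigma}},
\]
with $\hat h_{E_f}$ the N\'eron--Tate height on $E_f(\Q)$ and nonzero rational constants $c_\sigma$.

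It then remains to exploit positivity and algebraicity. By Gross--Zagier \cite{GZ}, $L'(f,1)\ge0$, with equality precisely when $\ord_{s=1}L(f,s)>1$. Each summand $c_\sigma L(\sym^2(g^\sigma)\otimes f,2)/\Omega_{F_\sigma}$ is nonnegative --- this reflects the sum-of-squares shape of the central-value mechanism behind \cite{yzz}, compatibly with the Gross--Kudla formula \cite[Prop.~10.8]{grku} --- so no cancellation can occur in the sum, and the summand vanishes iff $L(\sym^2(g^\sigma)\otimes f,2)=0$. Finally, the normalised values $L(\sym^2(g^\sigma)\otimes f,2)/\Omega_{F_\sigma}$ form a single $\Gal(\bar\Q/\Q)$-orbit of algebraic numbers, so they are all nonzero or all zero. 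Hence $P_{g,f}(\xi_\infty)$ has infinite order in $E_f(\Q)$ $\iff\hat h_{E_f}(P_{g,f}(\xi_\infty))>0\iff L'(f,1)\ne0$ and some (equivalently every) $L(\sym^2(g^\sigma)\otimes f,2)\ne0\iff\ord_{s=1}L(f,s)=1$ and $\ord_{s=2}L(\sym^2(g^\sigma)\otimes f,s)=0$ for all $\sigma$, as claimed.

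I expect the main obstacle to be the computation underlying the displayed height identity: pinning down the action of $\Pi_{[g],f}$ on $\AJ_{X^3}(\Delta_{\gks}(\xi_\infty))$ precisely enough to conclude both that the constants $c_\sigma$ are nonzero --- the Koszul-sign point, which is easy to get wrong, since one must notice that the cohomology class of the Gross--Kudla--Schoen cycle sits in $\wedge^2V_\ell(g^\sigma)\otimes V_\ell(f)$ rather than in the symmetric part --- and that the contributions of the distinct embeddings $\sigma$ assemble without cancellation, which is where the positivity coming from \cite{yzz} is genuinely used. The whole argument rests, of course, on the still-unpublished \cite{yzz}; without it one obtains only a one-sided implication.
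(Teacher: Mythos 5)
The paper does not actually prove Theorem \ref{thm:DRS}: its proof is the single-line citation to \cite[Thm.~3.7]{DRS}, so there is no internal argument to compare against. Your proposal is therefore a blind reconstruction of the Darmon--Rotger--Sols argument, and as such it captures the right mechanism. The factorization $L(g,g,f,s)=L(\sym^2 g\otimes f,s)\,L(f,s-1)$, combined with $L(f,1)=0$ (forced by $W(f)=-1$), gives $L'(F,2)=L(\sym^2 g\otimes f,2)\cdot L'(f,1)$; the Koszul-sign and $S_3$-invariance computation correctly places the Abel--Jacobi image of $\Delta_{\gks}(\xi_\infty)$ in the $\wedge^2 V_\ell(g^\sigma)\otimes V_\ell(f)\cong V_\ell(f)(-1)$ summand rather than the symmetric piece; and positivity comes from Gross--Zagier together with the Gross--Kudla/Yuan--Zhang--Zhang central-value formula, exactly as in \cite{DRS}. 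Your worry about the self-adjointness of $\pi_{[g]}$ is unfounded in this setting: the Rosati involution on $\End^0_\Q(J_0(p))=\T$ is trivial, so the good Hecke projectors are self-adjoint for the cup product, and the antisymmetric pairing is genuinely nondegenerate on $\wedge^2 V_\ell(g^\sigma)$. Where your sketch stops short of a proof is (i) the assertion that the constants $c_\sigma$ are nonzero --- making the displayed height identity precise requires carefully tracking degrees and polarizations when transporting the Beilinson--Bloch height on $\CH^2(X^3)_0$ to the N\'eron--Tate height on $E_f$ via $\Pi_{[g],f]}$ --- and (ii) the dependence on the still-unpublished \cite{yzz}, which you yourself flag. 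Both are handled in \cite{DRS}, so the plan is correct even though some of the bookkeeping is not carried out.
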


\begin{proof}
This is \cite[Thm. 3.7]{DRS}.
\end{proof}

\begin{remark}
Note that the triple product $L$-function attached to $(g,g,f)$ decomposes as 
\[
L(g,g,f,s)=L(f, s-1)L(\sym^2 g\otimes f, s),
\]
and therefore the assumptions of Theorem \ref{thm:DRS} imply in particular that $W(g,g,f)=-1$.
\end{remark}

In the complementary setting where $W(g,g,f)=+1$, we prove the following.

\begin{theorem}\label{CHPtors}
If $E_f$ admits split multiplicative reduction at $p$, then the Chow--Heegner point $P_{g, f}(e):=\Pi_{[g], f, *}(\Delta_{\gks}(e))$ is torsion in $E_f(\Q)$, for all $e\in X(\Q)$.
\end{theorem}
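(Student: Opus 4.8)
The plan is to reduce Theorem \ref{CHPtors} to Theorem \ref{GKStorsion} by exploiting functoriality of the complex Abel--Jacobi map with respect to correspondences, together with the injectivity of the Abel--Jacobi map on $E_f(\Q)$. First I would observe that since $E_f$ has split multiplicative reduction at $p$, the local root number of $f$ at $p$ is $-1$, i.e. $a_p(f)=1$ and $w_p(f)=-f$; hence, using the decomposition $L(g,g,f,s)=L(f,s-1)L(\sym^2 g\otimes f,s)$ and \eqref{rem:tripleroot}, the hypothesis $W(g,g,f)=+1$ is automatic once one knows $W(\sym^2 g\otimes f)=-1$ --- but more directly, $W(F)=a_p(g)^2 a_p(f)=a_p(f)=1$, so $F=g\otimes g\otimes f$ automatically satisfies Assumption \ref{sign} whenever $E_f$ has split multiplicative reduction at $p$. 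This is the key point that lets us invoke the previous theorem unconditionally in this setting.

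Next I would set up the comparison of Abel--Jacobi maps. The correspondence $\Pi_{[g],f}\in \corr^{-1}(X^3, E_f)$ of \eqref{pigf} induces, by functoriality of complex Abel--Jacobi maps with respect to correspondences, a commutative square relating $\AJ_{X^3}$ on $\CH^2(X^3)_0(\C)$ and $\AJ_{E_f}$ on $\CH^1(E_f)_0(\C)=E_f(\C)$; that is,
\[
\AJ_{E_f}(\Pi_{[g],f,*}(Z)) = (\Pi_{[g],f}^*)^\vee\bigl(\AJ_{X^3}(Z)\bigr)
\]
for all $Z\in \CH^2(X^3)_0$. Since $\Pi_{[g],f}=t_f\circ \Pi_{[g]}$ factors through the $[g]$-isotypic projector in the first two coordinates and the $t_f$-projector in the third, the induced map on intermediate Jacobians factors through the $F'$-isotypic component for $F'=g\otimes g\otimes f$; in particular $\Pi_{[g],f,*}(\Delta_{\gks}(e))=\Pi_{[g],f,*}((t_F)_*\Delta_{\gks}(e))$ (one may need to be slightly careful that $t_{[g]}$ versus $t_g$ and the Galois-orbit issue cause no discrepancy, but composing the projectors shows $\Pi_{[g],f,*}$ kills everything orthogonal to the $F'$-component). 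Applying the commutative square with $Z=(t_F)_*\Delta_{\gks}(e)$, and using Theorem \ref{GKStorsion} which gives that $\AJ_{X^3}((t_F)_*\Delta_{\gks}(e))$ is torsion in $J^2(X^3/\C)$, we conclude that $\AJ_{E_f}(P_{g,f}(e))$ is torsion in $E_f(\C)$.

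Finally I would use that the Abel--Jacobi map for the curve $E_f$ is an \emph{isomorphism} $\CH^1(E_f)_0(\C)\xrightarrow{\sim} E_f(\C) = J^1(E_f/\C)$ --- this is the classical Abel--Jacobi theorem --- so $P_{g,f}(e)$ itself is torsion in $E_f(\C)$, hence in $E_f(\Q)$ since $P_{g,f}(e)\in E_f(\Q)$ by construction (the correspondence $\Pi_{[g],f}$ is defined over $\Q$ and $e\in X(\Q)$). I expect the main obstacle to be bookkeeping: checking precisely that the functoriality of $\AJ$ with respect to the degree $-1$ correspondence $\Pi_{[g],f}$ holds with the normalizations used here (clearing denominators, the shift in codimension, compatibility of pushforward on Chow groups with pullback-dual on intermediate Jacobians), and confirming that replacing $\Delta_{\gks}(e)$ by $(t_F)_*\Delta_{\gks}(e)$ does not change the image under $\Pi_{[g],f,*}$ --- both of which are essentially formal once one is careful, with no serious analytic or arithmetic input beyond what Theorem \ref{GKStorsion} already provides.
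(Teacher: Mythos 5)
Your overall strategy is the same as the paper's --- reduce to Theorem \ref{GKStorsion} via functoriality of complex Abel--Jacobi maps with respect to $\Pi_{[g],f}$, then invoke that $\AJ_{E_f}$ is an isomorphism --- and your observation that split multiplicative reduction forces $a_p(f)=1$ and hence $W(F)=+1$ is exactly the right starting point. However, there is a genuine gap in the middle step that you flag parenthetically and then incorrectly dismiss.

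The problem is the claim that $\Pi_{[g],f,*}(\Delta_{\gks}(e))=\Pi_{[g],f,*}\bigl((t_F)_*\Delta_{\gks}(e)\bigr)$ with $t_F = t_g\otimes t_g\otimes t_f$, i.e.\ that $\Pi_{[g],f,*}$ ``kills everything orthogonal to the $F'$-component'' for $F'=g\otimes g\otimes f$. This is false whenever $K_g\neq\Q$. The correspondence $\Pi_{[g],f}=\pr_{12}^*(t_{[g]})\cdot\pr_{34}^*(t_f)$ is built from the \emph{Galois-orbit} idempotent $t_{[g]}=\sum_{h\in[g]}t_h$, not from $t_g$; it therefore factors through $(t_{[g]}\otimes t_{[g]}\otimes t_f)_*$, not through $(t_g\otimes t_g\otimes t_f)_*$, and it does \emph{not} annihilate the summands $t_{h_1}\otimes t_{h_2}\otimes t_f$ with $(h_1,h_2)\neq(g,g)$. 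Put differently, $t_{[g]}\circ t_g = t_g$ but $t_{[g]}\circ t_g\neq t_{[g]}$, so composing the projectors does not recover all of $\Pi_{[g],f}$. Applying Theorem \ref{GKStorsion} only to $F=g\otimes g\otimes f$ therefore does not control the image of the full cycle under $\Pi_{[g],f,*}$.

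The repair, and it is precisely what the paper does, is to expand $t_{[g]}\otimes t_{[g]}\otimes t_f=\sum_{h_1,h_2\in[g]}t_{h_1}\otimes t_{h_2}\otimes t_f$ and apply Theorem \ref{GKStorsion} to \emph{each} summand. This is where the observation that $a_p$ is Galois-invariant earns its keep: $a_p(h_1)=a_p(h_2)=a_p(g)\in\{\pm1\}$ for all Galois conjugates, so $W(h_1,h_2,f)=a_p(h_1)a_p(h_2)a_p(f)=a_p(f)=1$ and \emph{every} triple $(h_1,h_2,f)$, not just $(g,g,f)$, satisfies Assumption \ref{sign}. Only then can one conclude that $\AJ_{X^3}\bigl((t_{[g]}\otimes t_{[g]}\otimes t_f)_*\Delta_{\gks}(e)\bigr)$ is torsion, and the rest of your argument (the commutative square and the injectivity of $\AJ_{E_f}$) goes through unchanged. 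So the structure of your proof is correct, but you need to run the root-number argument uniformly over the Galois orbit of $g$ rather than at $g$ alone.
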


\begin{proof}
Recall from Section \ref{s:motivef} that $t_{[g]}=\sum_{h\in [g]} t_h$, and thus 
\[ 
t_{[g]}\otimes t_{[g]} \otimes t_{f}=\sum_{h_1, h_2\in [g]} t_{h_1}\otimes t_{h_2} \otimes t_{f}.
\]
By \eqref{rem:tripleroot}, for any $h_1, h_2\in [g]$, the global root number of the triple product $L$-function $L(h_1,h_2,f, s)$ is given by
$W(h_1, h_2, f)=a_p(h_1)a_p(h_2)a_p(f)$.
The $p$-th Fourier coefficient of a normalised cuspidal eigenform is the negative of the $w_p$-eigenvalue of the form, hence it belongs to $\{ \pm 1 \}$. In particular, since this coefficient belongs to $\Q$, it is fixed by the action of $\Gal(\bar{\Q}/\Q)$, and thus $a_p(g)=a_p(h_1)=a_p(h_2)\in \{ \pm 1 \}$. Consequently, it follows that $W(h_1, h_2, f)=a_p(f)=a_p(E_f)$. We have $a_p(E_f)=1$, since $E_f$ admits split multiplicative reduction at $p$, and thus the triple $(h_1,h_2, f)$ satisfies Assumption \ref{sign}. By Theorem \ref{GKStorsion}, for any $e\in X(\Q)$, $\AJ_{X^3}((t_{h_1}\otimes t_{h_2} \otimes t_{f})_*(\Delta_{\gks}(e)))$ is torsion in the intermediate Jacobian $J^2(X^3/\C)$. It follows that
$\AJ_{X^3}((t_{[g]} \otimes t_{[g]} \otimes t_{f})_*(\Delta_{\gks}(e)))$ is torsion in $J^2(X^3/\C)$.

Define the cycle
\[
\Pi:=\pr_{12}^*(\Delta)\cdot \pr_{34}^*(\Delta)\in \CH^2(X_0(p)^4).
\]
Viewing $t_{[g]} \otimes t_{[g]} \otimes t_{f}$ as an element of $\corr^0(X^3, X^3)_\Q$, and $\Pi$ as an element of $\corr^{-1}(X^3, X)$, we compute their composition to obtain 
\begin{equation}\label{eq:corr}
\Pi \circ (t_{[g]} \otimes t_{[g]} \otimes t_{f})
= \pr_{12}^*(t_{[g]} \circ t_{[g]})\cdot \pr_{34}^*(t_f) = \pr_{12}^*(t_{[g]})\cdot \pr_{34}^*(t_f)=\Pi_{[g], f},
\end{equation}
as elements of $\corr^{-1}(X^3, X)_\Q$. Note the use of the fact that $t_{[g]}$ is an idempotent correspondence, i.e., $t_{[g]}\circ t_{[g]}=t_{[g]}$. A similar calculation is carried out in \cite[\S 3]{DRS}.
We deduce the equality of points in $E_f(\Q)$ 
\begin{equation}\label{pointequal}
\Pi_*(t_{[g]} \otimes t_{[g]} \otimes t_{f})_*(\Delta_{\gks}(e))=P_{g, f}(e).
\end{equation}
By the functoriality of Abel--Jacobi maps with respect to correspondences, we have a commutative diagram
  \begin{equation*}
\begin{tikzcd}
 \CH^2(X^3)_0(\C) \arrow{r}{\AJ_{X^3}} \arrow{d}[swap]{\Pi_{[g], f,*}} 
 & J^2(X^3/\C) \arrow{d}{(\Pi_{[g], f}^*)^\vee} \\
 E_f(\C) \arrow{r}{\sim}[swap]{\AJ_{E_f}} 
 & J^1(E_f/\C).
\end{tikzcd}
\end{equation*}
Here $J^1(E_f/\C)=H^0(E_f(\C), \Omega^1_{E_f})^\vee \slash \im H_1(E_f(\C), \Z)$ is the Jacobian of $E_f$, and $\AJ_{E_f}$ is the classical Abel--Jacobi isomorphism for the elliptic curve $E_f$, given by the integration formula
\[
\AJ_{E_f}(P)(\alpha):= \int_{\cO}^P \alpha, \qquad \text{ for } \alpha\in  H^0(E_f(\C), \Omega^1_{E_f}),
\]
where $\cO$ is the origin of $E_f$.
 In particular, we have the equalities
\begin{align}
\AJ_{E_f}(P_{g, f}(e)) 
& =(\Pi_{[g], f}^*)^\vee(\AJ_{X^3}(\Delta_{\gks}(e)))\nonumber \\
& = (\Pi^*)^\vee ((t_{[g]} \otimes t_{[g]} \otimes t_{f})^*)^\vee \AJ_{X^3}(\Delta_{\gks}(e)) \label{eq} \\
& = (\Pi^*)^\vee \AJ_{X^3}((t_{[g]} \otimes t_{[g]} \otimes t_{f})_*(\Delta_{\gks}(e))). \nonumber
\end{align}
In the second equality we used \eqref{eq:corr}, and in the third equality we used the functoriality of $\AJ_{X^3}$ with respect to the correspondence $t_{[g]} \otimes t_{[g]} \otimes t_{f}$.
Since $\AJ_{X^3}((t_{[g]} \otimes t_{[g]} \otimes t_{f})_*(\Delta_{\gks}(e)))$ is torsion, the result follows from the fact that $\AJ_{E_f}$ is an isomorphism. 
\end{proof}

\begin{remark}\label{rem:daub}
Theorem \ref{CHPtors} with $e=\xi_\infty$ is a special case of \cite[Thm. 3.3.8]{daubthesis}. In his thesis \cite{daubthesis}, Daub proved more generally for composite level $N$ that if the local root number $W_p(g,g,f)=-1$ for some $p \: \vert \: N$, then the resulting Chow--Heegner points based at $\xi_\infty$ are torsion. His proof relies on an identification of these points with Zhang points \cite{zhang10}. As explained in the introduction (Section \ref{s:squarefree}), our method works for composite level $N$ and fully recovers \cite[Thm. 3.3.8]{daubthesis}.
\end{remark}

\begin{remark}\label{rem:stein}
Techniques have been developed in \cite{ddlr} to numerically calculate Chow--Heegner points associated with modified diagonal cycles on triple products of modular curves. The algorithms are based on a formula for the image of these cycles under the complex Abel--Jacobi map \eqref{torsiontripleCAJ} proved in \cite{DRS}. Most of the examples calculated in \cite{ddlr} concern the situation where the elliptic curve $E_f$ has algebraic rank equal to $1$. In particular, the global root number of $E_f$ is $-1$, and this is not the setting studied in the present paper. However, in the appendix of \cite{ddlr} by Stein, some examples are computed for which the rank of $E_f$ is $0$. In particular, it is verified numerically that $P_{g,f}(\xi_\infty)$ is a point of order $3$ in $E_f(\Q)$, where $E_f$ has Cremona label $37$b, and $A_{[g]}$ is the elliptic curve with Cremona label $37$a. Note that Corollary \ref{corocusp}, together with \eqref{eq}, implies that the order of $P_{g,f}(\xi_\infty)$ divides $6$, since $(37-1)/12=3$. As a consequence of the non-triviality of $P_{g,f}(\xi_\infty)$, the Abel--Jacobi image $\AJ_{X_0(37)^3}((t_{[g]} \otimes t_{[g]} \otimes t_{f})_*(\Delta_{\gks}(\xi_\infty)))$ is non-trivial by \eqref{eq}, and in particular the cycle $(t_{[g]} \otimes t_{[g]} \otimes t_{f})_*(\Delta_{\gks}(\xi_\infty)$ is non-trivial. By Corollary \ref{corocusp}, $\AJ_{X_0(37)^3}((t_{[g]} \otimes t_{[g]} \otimes t_{f})_*(\Delta_{\gks}(\xi_\infty)))$ is a non-trivial torsion element in the intermediate Jacobian. This proves Corollary \ref{coromain}. It would be interesting do perform more calculations of Chow--Heegner points that fall under the assumptions of the present paper.
\end{remark}

\subsection*{Acknowledgements}
It is a pleasure to thank Henri Darmon, Benedict Gross, and Ari Shnidman for helpful comments and corrections, as well as Congling Qiu and Wei Zhang for answering questions related to their work.  
The author was partially supported by the Institut des Sciences Math\'ematiques (ISM) au Qu\'ebec while at McGill University, and by an Emily Erskine Endowment Fund Postdoctoral Researcher Fellowship while at the Hebrew University of Jerusalem.

\phantomsection
\bibliographystyle{plain}
\bibliography{GKStorsion.bib}

\end{document}